\documentclass[preprint]{elsarticle}
\usepackage{amsmath,amsthm,verbatim,enumitem,xcolor,longtable,url} 
\usepackage{tikz}
\usepackage{tkz-berge,tkz-graph}


\setlist{itemsep=0ex}
\usetikzlibrary{matrix,arrows,decorations.pathreplacing,decorations.pathmorphing,positioning,patterns}
\tikzset{baseline=(current bounding box.west)}
\tikzset{every matrix/.style={execute at begin cell=\node\bgroup$,execute at end cell=$\egroup;,minimum size=4mm,matrix anchor=south west,inner sep=0pt}}

\definecolor{Light}{RGB}{190,190,190}  
\definecolor{Dark}{RGB}{198,171,27}

\usepackage[T1]{fontenc}
\usepackage[bitstream-charter]{mathdesign}
\usepackage{todonotes} 

\newtheorem{theorem}{Theorem}
 
\newtheorem{corollary}[theorem]{Corollary}
\newtheorem{conjecture}[theorem]{Conjecture}
\theoremstyle{definition}
\newtheorem{remark}[theorem]{Remark}

\newtheorem{example}[theorem]{Example}


\begin{document}

\begin{frontmatter}
\title{Upper Bounds on Sets of Orthogonal Colorings of Graphs}
\author{Serge C. Ballif}
\address{Nevada State College\\ \url{serge.ballif@nsc.edu}}

\begin{abstract}
We generalize the notion of orthogonal latin squares to colorings of simple graphs. Two $n$-colorings of a graph are said to be \emph{orthogonal} if whenever two vertices share a color in one coloring they have distinct colors in the other coloring. We show that the usual bounds on the maximum size of a certain set of orthogonal latin structures such as latin squares, row latin squares, equi-$n$ squares, single diagonal latin squares, double diagonal latin squares, or sudoku squares are a special cases of bounds on orthogonal colorings of graphs. 
\end{abstract}  

\begin{keyword}
orthogonal latin squares \sep graph coloring \sep chromatic number
\MSC[2010] 05B15 \sep 05C15
\end{keyword}
\end{frontmatter}


\section{Introduction}
A \emph{latin square of order $n$} is an $n\times n$ array filled with $n$ symbols such that each symbol occurs exactly once in each row and column. Two latin squares of order $n$ are said to be \emph{orthogonal} if, when superimposed, each of the $n^2$ possible ordered pairs of symbols occur. A set of pairwise orthogonal latin squares is said to be a set of \emph{mutually orthogonal latin squares} (MOLS).

We use the notation $N(n)$ for the largest size of a set of MOLS of order $n$. It is well known that $N(n)\le n-1$. When $n$ is a power of a prime one can construct a set of $n-1$ MOLS using finite fields, so when $n$ is a prime power $N(n)=n-1$ (see \cite[Ch. 2]{LM98}). In this paper we introduce a generalization of the function $N(n)$ that gives an upper bound on the cardinality of a set of pairwise orthogonal $n$-colorings of a graph.

\begin{example} \label{Polyomino Example}
A \emph{polyomino}\label{D: polyomino} is a (possibly disconnected) subset of cells from a square tiling of the plane. If we label the cells of a polyomino so that no symbol occurs more than once in any row or column, then we get a \emph{latin polyomino}.\label{D: latin polyomino} As an example, below we exhibit five latin polyominoes in two polyomino shapes.
\[
\begin{tikzpicture}[scale=.4]
\begin{scope}
\draw[fill=Dark!40] (0,1) rectangle (1,2) rectangle (2,1) rectangle (3,0);
\draw[fill=Dark!40] (2,2) rectangle (3,3);
\matrix 
{
&&1\\
1&2&\\
&&2\\
};
\end{scope}
\end{tikzpicture}
\qquad
\begin{tikzpicture}[scale=.4]
\begin{scope}
\draw[fill=Dark!40] (0,1) rectangle (1,2) rectangle (2,1) rectangle (3,0);
\draw[fill=Dark!40] (2,2) rectangle (3,3);
\matrix
{
&&2\\
1&2&\\
&&1\\
};
\end{scope}
\end{tikzpicture}
\qquad\qquad
\begin{tikzpicture}[scale=.4]
\begin{scope}
\draw[fill=Dark!40] (0,4) rectangle (1,3) rectangle (2,2) rectangle (3,1) rectangle (4,0);
\matrix
{
1&&&\\
&1&&\\
&&2&\\
&&&2\\
};
\end{scope}
\end{tikzpicture}
\quad
\begin{tikzpicture}[scale=.4]
\begin{scope}
\draw[fill=Dark!40] (0,4) rectangle (1,3) rectangle (2,2) rectangle (3,1) rectangle (4,0);
\matrix
{
1&&&\\
&2&&\\
&&1&\\
&&&2\\
};
\end{scope}
\end{tikzpicture}
\quad
\begin{tikzpicture}[scale=.4]
\begin{scope}
\draw[fill=Dark!40] (0,4) rectangle (1,3) rectangle (2,2) rectangle (3,1) rectangle (4,0);
\matrix
{
1&&&\\
&2&&\\
&&2&\\
&&&1\\
};
\end{scope}
\end{tikzpicture}
\]
Note that the two latin polyominoes on the left are orthogonal in the sense that when they are superimposed each of the ordered pairs is distinct. Similarly, the three latin polyominoes on the right are mutually orthogonal. In each case we have exceeded the bound $N(2)=1$ that occurs in the case where the polyomino shape is a square. 
\end{example}

In this paper we consider latin squares as a special case of a proper $n$-coloring of a simple graph. We view orthogonality of latin squares as a special case of orthogonal colorings of a graph, a notion first studied in \cite{CY99}. As a corollary to our main theorems we obtain bounds on the cardinality of orthogonal sets of several structures including equi-$n$ squares, row or column latin squares, latin squares, single diagonal latin squares, double diagonal latin squares, and partial latin squares.

A \emph{graph}\label{D: graph}, $G$, is a set of \emph{vertices}\label{D: vertex}, $V(G)$, and a set of \emph{edges}, $E(G)$, where each edge is an unordered pair of distinct elements of $V(G)$. The \emph{order} of $G$ is the number of vertices in $V(G)$. Two vertices are said to be \emph{adjacent}\label{D: adjacent} (or \emph{neighbors}) if they are connected by an edge in $E$. We write $u\sim v$ to signify that $u$ is adjacent to $v$. The \emph{degree}\label{D: degree} of a vertex is the number of vertices adjacent to it. An \emph{$r$-clique} is a (sub)graph of $r$ pairwise adjacent vertices. 

A \emph{(proper) coloring} of $G$ is a labeling of the vertices so that any two adjacent vertices have distinct labels (which we call \emph{colors}). An \emph{$n$-coloring}\label{D: n-coloring} is a coloring consisting of (at most) $n$ colors. It is usually convenient to assume these colors are the numbers $1,\ldots,n$.  The \emph{chromatic number}\label{D: chromatic number} $\chi=\chi(G)$, of $G$ is the minimum number, $n$, such that there exists an $n$-coloring of $G$. If $C$ is a coloring of $G$, then we write $C(v)$ for the color that $C$ assigns to $v$.

A latin square is an $n$-coloring of an $n\times n$ rook's graph. An \emph{$m\times n$ rook's graph} is a graph with $mn$ vertices in $m$ rows and $n$ columns, where two vertices are adjacent if they share a row or a column.

Two colorings, $C_1$ and $C_2$, of a graph, $G$, are said to be \emph{orthogonal}\label{D: orthogonal colorings} if whenever two distinct vertices share a color in $C_1$ they have distinct colors in $C_2$. A set of pairwise orthogonal $n$-colorings of $G$ is called a set of \emph{mutually orthogonal $n$-colorings}. We denote the maximum size of a set of mutually orthogonal $n$-colorings of the graph $G$ by \emph{$N(G,n)$}.\label{N(G,n)} 

\begin{example}\label{Orthogonal Colorings Example}
Below we display orthogonal colorings of a graph of order 9 and a graph of order 16.
\[
\begin{tikzpicture}[scale=.40]
\SetVertexSimple[MinSize = 5pt,
LineWidth = .5pt]
\tikzset{VertexStyle/.append style =
{inner sep = 0pt,%
outer sep = 0pt}}
\SetVertexNoLabel
\begin{scope}
\begin{scope}[rotate=30]
\Vertices[unit=3]{circle}{02,30,03,10,01,20}
\end{scope}
\begin{scope}[rotate=60]
\Vertices[unit=2]{circle}{11,22,32,13,23,31}
\end{scope}
\begin{scope}[rotate=90]
\Vertices[unit=1.15]{circle}{33,21,12} 
\end{scope}
\coordinate (00) at (0,0);
\Vertices[Node]{line}{00}
\Edges(00,11,22,33,00,22,11,33)
\Edges(00,31,23,12,00,23,31,12)
\Edges(00,13,32,21,00,32,13,21)
\Edges(12,21,33,12)
\Edges[style={bend left=60,looseness=1.5}](31,13,22,31)
\Edges[style={bend left=60,looseness=1.5}](23,32,11,23)
\Edges[style={bend left=20}](30,02,20,01,10,03,30)
\AddVertexColor{white}{00,01,02,03}
\AddVertexColor{Light}{10,11,12,13}
\AddVertexColor{Dark}{20,21,22,23}
\AddVertexColor{black}{30,31,32,33}
\draw (4,0) node{$\perp$};
\end{scope}
\begin{scope}[xshift=8cm]
\begin{scope}[rotate=30]
\Vertices[unit=3]{circle}{02,30,03,10,01,20}
\end{scope}
\begin{scope}[rotate=60]
\Vertices[unit=2]{circle}{11,22,32,13,23,31}
\end{scope}
\begin{scope}[rotate=90]
\Vertices[unit=1.15]{circle}{33,21,12} 
\end{scope}
\coordinate (00) at (0,0);
\Vertices[Node]{line}{00}
\Edges(00,11,22,33,00,22,11,33)
\Edges(00,31,23,12,00,23,31,12)
\Edges(00,13,32,21,00,32,13,21)
\Edges(12,21,33,12)
\Edges[style={bend left=60,looseness=1.5}](31,13,22,31)
\Edges[style={bend left=60,looseness=1.5}](23,32,11,23)
\Edges[style={bend left=20}](30,02,20,01,10,03,30)
\AddVertexColor{white}{00,10,20,30}
\AddVertexColor{Light}{01,11,21,31}
\AddVertexColor{Dark}{02,12,22,32}
\AddVertexColor{black}{03,13,23,33}
\end{scope}
\begin{scope}[xshift=-15cm]
\begin{scope}[rotate=45]
\Vertices[unit=2]{circle}{aa,bb,cc,ab,ba,ac,cb,bc}
\end{scope}
\coordinate (ca) at (0,0);
\Vertices[Node]{line}{ca}
\Edges(aa,bb,cc,ab,ba,ac,cb,bc,aa)
\Edges(bc,ca,ab)
\Edges(bb,ca,ac)
\AddVertexColor{white}{aa,ab,ac}
\AddVertexColor{Dark}{ba,bb,bc}
\AddVertexColor{black}{ca,cb,cc}
\draw (3,0) node{$\perp$};
\begin{scope}[xshift=6cm]
\begin{scope}[rotate=45]
\Vertices[unit=2]{circle}{aa,bb,cc,ab,ba,ac,cb,bc}
\end{scope}
\coordinate (ca) at (0,0);
\Vertices[Node]{line}{ca}
\Edges(aa,bb,cc,ab,ba,ac,cb,bc,aa)
\Edges(bc,ca,ab)
\Edges(bb,ca,ac)
\AddVertexColor{white}{aa,ba,ca}
\AddVertexColor{Dark}{ab,bb,cb}
\AddVertexColor{black}{ac,bc,cc}
\end{scope}
\end{scope}
\end{tikzpicture}
\]
\end{example}

A graph with a pair of orthogonal $n$-colorings can have at most $n^2$ vertices because there are at most $n^2$ distinct ordered pairs of labels. In fact, if any color shows up more than $n$ times in a coloring, then the coloring has no orthogonal mate.  At the other extreme we may have an overabundance of colors---if $n\ge|V(G)|$ then each vertex can be assigned a different color. Such a coloring is orthogonal to any coloring, including itself.

In the reference, \cite{CY99}, Caro and Yuster investigate the number of colors needed for a graph to have orthogonal colorings. The authors define the \emph{orthogonal chromatic number of $G$}, $O\!\chi(G)$,\label{Ochi(G)} to be the minimum number of colors in any pair of orthogonal $O\!\chi(G)$-colorings of $G$.  Similarly, the \emph{$k$-orthogonal chromatic number of $G$}, $O\!\chi_k(G)$, is the minimum number of colors required so that there exist $k$ mutually orthogonal $O\!\chi_k(G)$-colorings of $G$. The authors find upper bounds on the values of $O\!\chi(G)$ and $O\!\chi_k(G)$ in terms of the parameters of $G$ such as the maximum degree of any vertex of $G$ or the chromatic number of $G$. They also show that several classes of graphs always have the lowest possible value $O\!\chi(G)=\left\lceil \sqrt{|V(G)|}\right\rceil$. 

Here we are concerned with the function $N(G,n)$, which is related to the function $O\!\chi_k(G)$ by the inequalities 
\[
N(G,O\!\chi_k(G))\ge k\qquad\text{and}\qquad O\!\chi_{N(G,n)}(G)\le n.
\]
By studying the function $N(G,n)$ we obtain results that are complementary to the bounds of \cite{CY99} in that they yield lower bounds for $O\!\chi_k(G)$ . In Section~\ref{Orthogonal Chromatic Number Section} we state a few of these lower bounds for $O\!\chi_k(G)$ as corollaries to our main theorems.

Many authors have studied edge colorings of graphs as opposed to vertex colorings. A \emph{proper edge coloring} of a graph is a labeling of the edges of a graph such that any two edges that share a vertex receive distinct labels. For each graph, $G$, the \emph{line graph of $G$} is the graph with vertex set $E(G)$, and two vertices are adjacent if and only if they share a vertex in $G$. Each edge coloring of $G$ corresponds to a vertex coloring of the line graph of $G$ in a natural way, so any question about edge coloring can be converted into a question about vertex coloring. The converse statement does not hold because most graphs are not the line graph of any graph. 

Despite the fact that each edge coloring may be viewed as an example of vertex coloring, many questions about edge colorings are interesting in their own right and have been studied in the context of edge colorings. Of particular note to us is the article \cite{ADH85} by Archdeacon and Dinitz where they studied orthogonal edge colorings of graphs. In their article they viewed a latin square of order $n$ as an edge coloring of the complete bipartite graph $K_{n,n}$. Results about orthogonal edge colorings can be converted to results about orthogonal vertex coloring by studying the corresponding line graph. 

\begin{remark}
A \emph{subgraph}\label{D: subgraph}, $H$, of a graph, $G$, is a graph such that $V(H)\subseteq V(G)$ and $E(H)\subseteq E(G)$. It will always be the case that if $H$ is a subgraph of $G$, then $N(H,n)\ge N(G,n)$ because mutually orthogonal colorings of $G$ are also mutually orthogonal colorings of $H$.
\end{remark}

Our main results are four theorems that give upper bounds for $N(G,n)$, namely  the \emph{degree bound} of Theorem~\ref{Degree Bound Theorem}, the \emph{clique bound} of Theorem~\ref{Clique Bound Theorem}, the \emph{average-degree bound} of Theorem~\ref{Average-Degree Bound Theorem}, and the \emph{edge bound} of Theorem~\ref{Edge Bound Theorem}. The first two theorems give bounds that depend on a large number of edges locally. 

\begin{theorem}[degree bound]\label{Degree Bound Theorem}
Let $G$ be a graph of order $n^2-m$ where $0\le m< n-1$, and let $\Delta$ be the maximum degree of any vertex of $G$. If $\Delta\ge n^2-n$, then $N(G,n)\le1$. Otherwise
\[
N(G,n)\le \left\lfloor\frac{n^2-m-\Delta-1}{n-m-1}\right\rfloor.
\]
\end{theorem}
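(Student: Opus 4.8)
The plan is to fix a set of $k = N(G,n)$ mutually orthogonal $n$-colorings $C_1,\dots,C_k$ together with a vertex $v$ realizing the maximum degree $\Delta$, and then to count in two ways the incidences at which some vertex shares $v$'s color. The substantive regime is $k\ge 2$: if $k\le 1$ there is nothing to prove, since I will check at the end that the stated bound is automatically at least $1$. Assuming $k\ge 2$, every $C_i$ has an orthogonal mate, so—as observed just before the statement—each color occurs at most $n$ times in each $C_i$.

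The first ingredient is a lower bound on the size of $v$'s color class. In $C_i$ the $n^2-m$ vertices are distributed among $n$ colors, each used at most $n$ times. If the color $C_i(v)$ is used exactly $s_i$ times, the remaining $n^2-m-s_i$ vertices must fit into the other $n-1$ color classes, so $n^2-m-s_i\le (n-1)n$, whence $s_i\ge n-m$. Because $C_i$ is a proper coloring, none of the $s_i-1\ge n-m-1$ other vertices in $v$'s color class is adjacent to $v$, so they all lie among the $n^2-m-1-\Delta$ non-neighbors of $v$.

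The second ingredient is the orthogonality constraint. If a non-neighbor $u$ of $v$ satisfied $C_i(u)=C_i(v)$ and $C_j(u)=C_j(v)$ for distinct indices $i\ne j$, then $u$ and $v$ would share a color in $C_i$ and yet also share a color in $C_j$, contradicting orthogonality. Hence each non-neighbor of $v$ shares $v$'s color in at most one coloring. Summing over all colorings, the total number of (non-neighbor, coloring) incidences at which $v$'s color is shared is on one hand at least $k(n-m-1)$ by the first ingredient and on the other hand at most the number of non-neighbors of $v$, so
\[
k\,(n-m-1)\le \sum_{i=1}^{k}(s_i-1)\le n^2-m-1-\Delta.
\]
Since $m<n-1$ forces $n-m-1\ge 1>0$, I may divide by $n-m-1$ and take the floor to obtain the claimed bound. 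The degenerate case $\Delta\ge n^2-n$ follows from the same inequality: its right-hand side is then at most $n-m-1$, which is incompatible with $k\ge2$, forcing $N(G,n)\le 1$.

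I expect the main obstacle to be isolating the lower bound $s_i\ge n-m$ and recognizing that this is precisely the quantity that makes the two counts close; the orthogonality half is routine once the right incidence structure—a fixed maximum-degree vertex counted against its non-neighbors—has been chosen. A secondary point to verify is that in the ``otherwise'' case the bound $\left\lfloor (n^2-m-\Delta-1)/(n-m-1)\right\rfloor$ is at least $1$ (equivalently $\Delta\le n^2-n$), so that the trivial cases $k\le 1$ require no separate treatment.
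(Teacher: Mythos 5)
Your proof is correct and follows essentially the same route as the paper's: fix a maximum-degree vertex, use the pigeonhole principle (each color class has size at most $n$ once $k\ge 2$) to show its color class has at least $n-m$ members, observe by orthogonality that these classes meet only at the vertex itself and otherwise lie among its $n^2-m-\Delta-1$ non-neighbors, and conclude $k(n-m-1)\le n^2-m-\Delta-1$. Your treatment of the case $\Delta\ge n^2-n$ as a corollary of that same inequality, together with the explicit check that the floor is at least $1$ in the remaining case, is if anything slightly tidier than the paper's separate pigeonhole argument for that regime.
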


\begin{theorem}[clique bound]\label{Clique Bound Theorem}
Let $r$, $s$, and $n$ be integers satisfying $1<r, s\le n<r+s$. Let $G$ be a graph with an $s$-clique, $A$, that is disjoint from an $r$-clique, $B$, such that each vertex in $B$ is adjacent to at least $j$ vertices in $A$. Then
\[
N(G,n)\le \left\lfloor\frac{r(s-j)}{r+s-n}\right\rfloor.
\]
\end{theorem}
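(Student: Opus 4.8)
The plan is to bound the size $k$ of an arbitrary set of mutually orthogonal $n$-colorings $C_1,\dots,C_k$ of $G$ by double counting \emph{monochromatic cross-pairs}, that is, pairs $(a,b)\in A\times B$ with $C_i(a)=C_i(b)$ for some index $i$. The whole argument hinges on two competing estimates for the number of such coincidences: a lower bound forced on each individual coloring by the hypothesis $n<r+s$, and an upper bound coming from orthogonality together with the adjacency condition.

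First I would record the local color counts. Since $A$ is an $s$-clique with $s\le n$, every proper $n$-coloring assigns $s$ distinct colors to the vertices of $A$; likewise each coloring uses $r$ distinct colors on the $r$-clique $B$. As these $r+s$ colors are drawn from a palette of only $n<r+s$ colors, inclusion--exclusion shows that in each coloring $C_i$ at least $r+s-n$ colors occur on both $A$ and $B$. Because $A$ and $B$ are cliques, each shared color is realized by a unique vertex of $A$ and a unique vertex of $B$, so $C_i$ produces at least $r+s-n$ distinct monochromatic cross-pairs. Summing over $i$ gives at least $k(r+s-n)$ incidences of the form (coloring, monochromatic cross-pair).

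Next I would bound the same quantity from above. Orthogonality is the crucial ingredient: if a pair $(a,b)$ were monochromatic in two distinct colorings $C_i$ and $C_{i'}$, then $a$ and $b$ would share a color in $C_i$ and also in $C_{i'}$, contradicting $C_i\perp C_{i'}$; hence each cross-pair is monochromatic in at most one coloring. Moreover a monochromatic pair must be non-adjacent, and since each $b\in B$ is adjacent to at least $j$ vertices of $A$, it is non-adjacent to at most $s-j$ of them, so there are at most $r(s-j)$ candidate pairs in all. Combining the two estimates yields $k(r+s-n)\le r(s-j)$, and since $r+s-n>0$ we may divide and take the floor to obtain the stated bound.

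The computations here are routine; the only real content is choosing the right object to count. The main thing to get right is the pairing-up of the two bounds---ensuring that the $r+s-n$ forced coincidences per coloring really are distinct pairs (guaranteed by the clique structure) and that the orthogonality constraint is exactly what caps each pair's multiplicity at one. Once the monochromatic cross-pair is identified as the quantity to track, the inequality falls out immediately, with no case analysis required.
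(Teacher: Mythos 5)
Your proof is correct and is essentially the paper's argument in complementary form: the paper normalizes the colors on $A$ and double-counts occurrences at vertices of $B$ of colors \emph{outside} $A$'s palette, whereas you double-count the color coincidences between $A$ and $B$ directly. Both arguments rest on the same two facts---pigeonhole forces at least $r+s-n$ coincidences per coloring, while orthogonality plus the adjacency hypothesis allows each vertex of $B$ at most $s-j$ coincidences in total---and both reduce to the inequality $k(r+s-n)\le r(s-j)$.
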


The next two theorems give upper bounds on $N(G,n)$ based on the number of edges globally.

\begin{theorem}[average-degree bound]\label{Average-Degree Bound Theorem}
Let $G$ be a graph of order $v$ with average vertex degree $D$. Then for $n<v$,
\[
N(G,n)\le\left\lfloor\frac{(v-D-1)n}{v-n}\right\rfloor.
\]
\end{theorem}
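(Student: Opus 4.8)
The plan is to bound $k := N(G,n)$ by a double-counting argument on \emph{monochromatic pairs}, that is, unordered pairs of distinct vertices receiving the same color in a given coloring. Fix a maximum set $C_1,\ldots,C_k$ of mutually orthogonal $n$-colorings of $G$. First I would record two structural facts. Since each $C_i$ is a proper coloring, any monochromatic pair of $C_i$ consists of \emph{non-adjacent} vertices; hence every monochromatic pair arising from any of the colorings is a non-edge of $G$. Second, orthogonality forces each non-edge to be monochromatic in \emph{at most one} of the colorings: if a pair $\{u,w\}$ shared a color in both $C_i$ and $C_j$ with $i\ne j$, then $C_i$ and $C_j$ would fail to be orthogonal. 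Consequently the total number of monochromatic pairs, summed over $C_1,\ldots,C_k$, is at most the number of non-edges of $G$.

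Next I would count the non-edges. The degrees of $G$ sum to $vD$, so $G$ has $vD/2$ edges and therefore
\[
\binom{v}{2}-\frac{vD}{2}=\frac{v(v-1-D)}{2}
\]
non-edges. The key lower bound is on the number of monochromatic pairs of a \emph{single} coloring. If $C_i$ has color classes of sizes $a_1,\ldots,a_t$ with $t\le n$ and $\sum_\ell a_\ell=v$, then its number of monochromatic pairs is $\sum_\ell\binom{a_\ell}{2}$. By convexity of $x\mapsto\binom{x}{2}$ (its second derivative is $1$), this quantity is minimized when the class sizes are as equal as possible; padding with empty classes to obtain exactly $n$ terms and applying Jensen's inequality at the balanced value $v/n$ gives
\[
\sum_\ell\binom{a_\ell}{2}\ge n\binom{v/n}{2}=\frac{v(v-n)}{2n}.
\]
Because $n<v$ this lower bound is positive, and since the continuous optimum is taken over a feasible set containing every integer partition into at most $n$ classes, it is a genuine lower bound in our discrete setting.

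Combining the three ingredients, each coloring contributes at least $\frac{v(v-n)}{2n}$ monochromatic pairs while the $k$ colorings together contribute at most $\frac{v(v-1-D)}{2}$, so
\[
k\cdot\frac{v(v-n)}{2n}\le\frac{v(v-1-D)}{2}.
\]
Solving for $k$ yields $k\le\frac{(v-D-1)n}{v-n}$, and since $k$ is an integer we may pass to the floor to obtain the stated bound. The main obstacle is making the per-coloring convexity estimate airtight---specifically, justifying that the balanced continuous minimum bounds every admissible integer partition into at most $n$ classes from below, including the case of fewer than $n$ nonempty classes---after which the double counting and the final arithmetic are routine.
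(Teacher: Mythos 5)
Your proof is correct. It is the same double count as the paper's, just viewed globally rather than vertex-by-vertex: you count unordered monochromatic pairs, observe that each such pair is a non-edge and that orthogonality lets each non-edge be monochromatic in at most one of the $k$ colorings, and bound the per-coloring count from below by Jensen applied to $\sum_\ell\binom{a_\ell}{2}$. The paper instead fixes a vertex, notes that its color-mates in the various colorings form disjoint sets also disjoint from its neighborhood, and bounds the average number of color-mates per coloring below by $\frac{v}{n}-1$ via a weighted-average (Chebyshev-type) inequality; multiplying your pair inequality by $2/v$ recovers exactly the paper's displayed inequality $(\frac{v}{n}-1)k+D\le v-1$, and your Jensen step is equivalent to the paper's weighted-average step. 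Your organization is in fact closer to the paper's proof of the edge bound (Theorem~\ref{Edge Bound Theorem}), with the exact integer minimizer $(n-r)\binom{\lfloor v/n\rfloor}{2}+r\binom{\lceil v/n\rceil}{2}$ replaced by the continuous relaxation $n\binom{v/n}{2}$ --- which is precisely why the edge bound always dominates this one. The step you flag as the ``main obstacle'' is not one: pad to exactly $n$ classes with zeros (each contributing $\binom{0}{2}=0$) and apply Jensen to the convex function $x\mapsto x(x-1)/2$ on nonnegative reals summing to $v$; no integrality is needed for a lower bound.
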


\begin{theorem}[edge bound]\label{Edge Bound Theorem}
Let $G$ be a graph with $v$ vertices and $e$ edges, $v>n$. Write $v=qn+r$ with $0\le r< n$. Then
\[
N(G,n)\le \left\lfloor \frac{\binom{v}{2}-e}{(n-r)\binom{\lfloor \frac{v}{n}\rfloor}{2}+r\binom{\lceil \frac{v}{n}\rceil}{2}}\right\rfloor.
\]
\end{theorem}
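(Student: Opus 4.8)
The plan is to count, across all colorings in an orthogonal set, the pairs of vertices that receive a common color, and to exploit the fact that orthogonality forbids any single pair from being monochromatic in two different colorings. The whole argument is a double-counting of such \emph{monochromatic pairs}.

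First I would establish a lower bound on the number of monochromatic pairs contributed by a single $n$-coloring. Fix an $n$-coloring $C$ of $G$, and let $a_1,\dots,a_n$ be the sizes of its color classes (some possibly zero), so that $\sum_i a_i = v$. The number of pairs of vertices sharing a color under $C$ is $\sum_i \binom{a_i}{2}$. Since $x\mapsto\binom{x}{2}$ is convex, this sum is minimized when the $a_i$ are as nearly equal as possible; writing $v=qn+r$ with $q=\lfloor v/n\rfloor$, the balanced distribution consists of $r$ classes of size $q+1$ and $n-r$ classes of size $q$. Hence every $n$-coloring of $G$ has at least
\[
D := (n-r)\binom{q}{2} + r\binom{q+1}{2} = (n-r)\binom{\lfloor v/n\rfloor}{2} + r\binom{\lceil v/n\rceil}{2}
\]
monochromatic pairs, which is exactly the denominator appearing in the statement.

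Next I would record two structural facts. Because each coloring is proper, two vertices sharing a color cannot be adjacent, so every monochromatic pair is a non-edge of $G$; there are $\binom{v}{2}-e$ such non-edges. And by the definition of orthogonality, if a pair $\{u,w\}$ is monochromatic in colorings $C_i$ and $C_j$ with $i\ne j$, then $C_i$ and $C_j$ would fail to be orthogonal. Thus in a set of mutually orthogonal $n$-colorings each pair of vertices is monochromatic in at most one of the colorings.

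Finally I would combine these by double counting. Let $N=N(G,n)$ and let $C_1,\dots,C_N$ be mutually orthogonal $n$-colorings. Summing the monochromatic-pair counts over the colorings gives at least $ND$; on the other hand, since every monochromatic pair is a non-edge and no non-edge is counted twice, the total is at most $\binom{v}{2}-e$. Therefore $ND\le\binom{v}{2}-e$, and dividing by $D$ and taking the integer part yields the claimed bound. The only delicate point is the convexity step bounding $D$ from below, together with checking that using fewer than $n$ nonempty color classes cannot decrease the count; everything else is a clean incidence count, so I do not expect a serious obstacle.
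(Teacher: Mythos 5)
Your proposal is correct and follows essentially the same argument as the paper: a double count of monochromatic pairs, using convexity of $\binom{x}{2}$ to lower-bound the per-coloring count by $(n-r)\binom{\lfloor v/n\rfloor}{2}+r\binom{\lceil v/n\rceil}{2}$, and using properness and orthogonality to show these pairs are disjoint non-edges, giving $k D + e \le \binom{v}{2}$. Your version is in fact slightly more explicit than the paper's about why the $e$ term enters (monochromatic pairs must be non-edges), which is a welcome clarification rather than a divergence.
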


If $D$ is the average degree of the vertices of $G$, and $e$ is the number of edges in $G$, then $D=\frac{2e}{|V(G)|}$, so the edge bound and the average-degree bound depend on the same information. In fact the edge bound will always perform as well as the average-degree bound. We have included the average-degree bound because of its simplicity and because it is sufficiently strong to give tight bounds for most latin structures.

In Section~\ref{Section: Bounds for latin structures} we provide several corollaries to Theorems~\ref{Clique Bound Theorem}--\ref{Edge Bound Theorem}. We shall show that the best possible upper bounds on the cardinality of sets of mutually orthogonal latin structures can be derived from the four bounds we have introduced. Table~\ref{short table} on page~\pageref{short table} summarizes many of the results of Section~\ref{Section: Bounds for latin structures} by showing the upper bounds on the cardinality of sets of orthogonal latin structures. 

In Section~\ref{sec low bounds} we use an upper bound on $O\!\chi_k(G)$ from \cite{CY99} to obtain a lower bound for $N(G,n)$. In Section~\ref{Orthogonal Chromatic Number Section} we obtain lower bounds on the $k$-orthogonal chromatic number $O\!\chi_k(G)$. In Section~\ref{Section: Proofs} we prove each of the four bounds. In Section~\ref{Section: Constructions} we introduce some graph constructions that arise from sets of mutually orthogonal colorings of a graph.

\noindent \kern-1cm
\newcommand{\columnbox}[1]{\begin{minipage}{1in}\vspace{.05cm}  \begin{center}#1\end{center}\end{minipage}\vspace{.05cm}}
\newcommand{\rowbox}[1]{\begin{minipage}{2cm}\begin{center}#1\end{center}\end{minipage}} 
\begin{table}[h]
\caption{We list the upper bounds for $N(G,n)$ (the maximum cardinality of mutually orthogonal sets of latin structures). Upper bounds that are known to be best possible are in bold.}   \label{short table}
\begin{tabular}{ccccc}
 &\kern-.5cm \rowbox{Degree Bound}
 	&\kern-.1cm \rowbox{Average-Degree Bound}\hspace{.3cm} 
 		&\kern-.5cm \rowbox{Clique Bound} 
 			&\kern-.5cm \rowbox{Edge Bound}\\
Graph $G$ 
	&\kern-.5cm $\left\lfloor\frac{n^2-m-\Delta-1}{n-m-1}\right\rfloor$ 
		&\kern-.1cm $\left\lfloor\frac{(v-D-1)n}{v-n}\right\rfloor$
			&\kern-.5cm $\left\lfloor\frac{r(s-j)}{r+s-n}\right\rfloor$
				&\kern-.5cm $\left\lfloor \frac{\binom{v}{2}-e}{(n-r)\binom{\lfloor \frac{v}{n}\rfloor}{2}+r\binom{\lceil \frac{v}{n}\rceil}{2}}\right\rfloor$\\
\hline\hline 
\columnbox{equi-$n$ square} & $\mathbf{n+1}$ & $\mathbf{n+1}$ & --- & $\mathbf{n+1}$ 
\\\hline
\columnbox{row or column latin square of order $n$} & $\mathbf{n}$ & $\mathbf{n}$ & $\mathbf{n}$ & $\mathbf{n}$
\\\hline
\columnbox{latin square of order $n$} & $\mathbf{n-1}$ & $\mathbf{n-1}$ & $\mathbf{n-1}$ & $\mathbf{n-1}$
\\\hline
\columnbox{single diagonal latin square of order $n$} & $\mathbf{n-2}$ & $\mathbf{n-2}$ & $\mathbf{n-2}$ & $\mathbf{n-2}$
\\\hline
\columnbox{double diagonal latin square of odd order, $n>3$} & $\mathbf{n-3}$ & $n-2$ & $\mathbf{n-3}$ & $n-2$
\\\hline
\columnbox{sudoku square of order $n$} & $n-2$ & $n-2$ & $\mathbf{n-\sqrt{n}}$ & $n-2$
\\\hline
\columnbox{$m\times $n latin rectangle $m\le n$} & --- & $\mathbf{n-1}$ & $\mathbf{n-1}$ & $\mathbf{n-1}$
\end{tabular}
\end{table}

\section{Upper Bounds on sets of Orthogonal Latin Structures}\label{Section: Bounds for latin structures}
We first define several structures that are related to latin squares. Then we show that the four bounds give the best possible value of $N(G,n)$ for each structure $G$.

\subsection{Latin Squares}

An \emph{equi-$n$ square} is an $n\times n$ array such that each of $n$ symbols occurs in exactly $n$ cells. A \emph{row} (resp. \emph{column}) \emph{latin} square of order $n$ is an $n\times n$ array where each of $n$ symbols occurs in each row (resp. column). A \emph{single diagonal latin square} is a latin square where each symbol on the main diagonal is distinct. A \emph{double diagonal latin square} is a single diagonal latin square where each symbol along the back diagonal is also distinct.

Each of these latin structures is a balanced $n$-coloring of a graph with $n^2$ vertices. Two vertices of this underlying graph are adjacent if the structure forbids that they share a color. For instance, two vertices in the underlying graph for a single diagonal latin square are adjacent if they are in the same row or column or if they are both in the main diagonal.

\begin{corollary}\label{MOLS Corollary}
Let $n\ge2$ be a positive integer.
\begin{enumerate}[label=\textup{(\arabic*)}]
\item No set of mutually orthogonal equi-$n$ squares of order $n$ consists of more than $n+1$ squares. \label{equi-$n$ bound}
\item No set of mutually orthogonal row (or column) latin squares of order $n$ consists of more than $n$ squares. \label{row latin bound}
\item No set of mutually orthogonal latin squares of order $n$ consists of more than $n-1$ squares.\label{latin bound}
\item No set of mutually orthogonal single diagonal latin squares of order $n$ consists of more than $n-2$ squares.
\label{single diagonal bound}
\item No set of mutually orthogonal double diagonal latin squares of odd order $n>3$ consists of more than $n-3$ squares. (See \textup{\cite{Ger74}}) \label{double diagonal bound}
\end{enumerate}
\end{corollary}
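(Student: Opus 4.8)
The plan is to realize each of the five structures as a proper $n$-coloring of an explicit graph $G$ of order exactly $n^2$ and then to apply the degree bound, Theorem~\ref{Degree Bound Theorem}, with $m=0$. In each case the edges of $G$ record precisely which pairs of cells the structure forbids from sharing a symbol: the empty graph for the equi-$n$ square; a disjoint union of $n$ row-cliques $K_n$ for a row latin square; the $n\times n$ rook's graph for a latin square; the rook's graph together with a clique on the main diagonal for a single diagonal latin square; and the rook's graph together with cliques on both diagonals for a double diagonal latin square. Since the order is $n^2$ we take $m=0$ (and $0\le m<n-1$ holds as $n\ge2$), so the degree bound reads $N(G,n)\le\left\lfloor\frac{n^2-\Delta-1}{n-1}\right\rfloor$.

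The point that unifies all five parts is that the maximum degree of each graph is a multiple of $n-1$. I would write $\Delta=k(n-1)$, where $k$ counts the lines --- rows, columns, or diagonals --- through a vertex of largest degree, and then use the factorization $n^2-1=(n-1)(n+1)$ to get
\[
\left\lfloor\frac{n^2-\Delta-1}{n-1}\right\rfloor=\left\lfloor\frac{(n-1)(n+1)-k(n-1)}{n-1}\right\rfloor=n+1-k,
\]
with no rounding. It then remains only to check that $k=0,1,2,3,4$ for the five structures, giving the bounds $n+1,\,n,\,n-1,\,n-2,\,n-3$.

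Computing $\Delta$ reduces to counting the neighbors of the densest vertex while confirming that the neighbor sets coming from distinct lines are pairwise disjoint. A row and a column through a common cell meet only there, so a generic rook's-graph vertex has degree $2(n-1)$; a main-diagonal cell gains $n-1$ further neighbors from the diagonal clique, disjoint from its row and column, giving $3(n-1)$. For the double diagonal graph of odd order the relevant vertex is the center cell, which lies on its row, its column, the main diagonal, and the back diagonal at once; the plan is to verify that these four $(n-1)$-element neighbor sets are pairwise disjoint, so that $\Delta=4(n-1)$ and $k=4$.

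I expect the disjointness bookkeeping in the diagonal cases to be the only delicate step, and it is exactly where the oddness of $n$ is used: for odd $n$ the two diagonals meet in the single center cell, producing a vertex that lies on all four lines, whereas for even $n$ no cell lies on both diagonals. A secondary point is the small-order boundary: when $k=n$ (a latin square with $n=2$, or a single diagonal square with $n=3$) one has $\Delta=n^2-n$, the first clause of the degree bound applies, and it returns $N(G,n)\le1=n+1-k$, consistent with the uniform formula. As a cross-check I would note that the clique bound, Theorem~\ref{Clique Bound Theorem}, recovers the same values --- two row-cliques ($r=s=n$, $j=1$) for the latin square, and the main diagonal against the back diagonal with its center removed ($r=n-1$, $s=n$, $j=3$) for the odd-order double diagonal square.
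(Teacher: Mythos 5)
Your proposal is correct and follows essentially the same route as the paper: apply the degree bound with $m=0$ to the underlying graph of order $n^2$ for each structure, computing $\Delta=k(n-1)$ with $k=0,1,2,3,4$ (the paper likewise uses the upper-left cell for the single diagonal case and the center cell, requiring odd $n$, for the double diagonal case). Your additional checks --- the pairwise disjointness of the neighbor sets from distinct lines, the $\Delta=n^2-n$ boundary clause, and the clique-bound cross-check --- are all sound and only make the argument more explicit than the paper's.
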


\begin{proof}
To prove each part we shall use the degree bound with $m=0$. 

In an equi-$n$ square, no cells are adjacent, so $\Delta= 0$. For statement \ref{equi-$n$ bound} the degree bound is $\frac{n^2-(0)-1}{n-1}=n+1$.

In a row latin square the rows form an $n$-clique, and so each vertex has degree $\Delta=n-1$. Thus for \ref{row latin bound} the degree bound is $\frac{n^2-(n-1)-1}{n-1}=n$.

In a latin square each vertex is adjacent to all others in the same row and column, so each vertex has degree $\Delta=2n-2$. Thus for \ref{latin bound} the degree bound is $\frac{n^2-(2n-2)-1}{n-1}=n-1$.

In a single diagonal latin square, the upper left cell is adjacent to each vertex in the top row, the left column, or the main diagonal, so it has degree $\Delta=3n-3$. In statement \ref{single diagonal bound} we have a degree bound of $\frac{n^2-(3n-3)-1}{n-1}=n-2$.

In a double diagonal latin square of odd order the center cell is adjacent to $\Delta=4n-4$ cells, so we have a degree bound of $\frac{n^2-(4n-4)-1}{n-1}=n-3$ for \ref{double diagonal bound}.
\end{proof}

\subsection{Gerechte Designs and Sudoku Squares}

A \emph{gerechte design of order $n$}\label{D: gerechte design} is an $n\times n$ array that has been partitioned into $n$ regions of size $n$ and filled with $n$ symbols so that each row, column, and region contains each symbol exactly once. A \emph{sudoku square of order $n^2$}\label{D: sudoku square} is a gerechte design of order $n^2$ where the partitioned regions are the $n^2$ subarrays (starting from the top left corner) of size $n \times n$. 

As a corollary to  Theorem~\ref{Clique Bound Theorem} we get a bound on the size of a set of mutually orthogonal gerechte designs. The following bound was proved in \cite[Corollary 2.2]{BCC08}.

\begin{corollary}\label{Gerechte Corollary}
Given a partition of an $n\times n$ array into regions $S_1,\ldots,S_n$, each of size $n$, the size of a set of mutually orthogonal gerechte designs for this partition is at most $n-j$, where $j$ is the maximum size of the intersection of a row or column with one of the sets $S_1,\ldots,S_n$ with $j<n$.
\end{corollary}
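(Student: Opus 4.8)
The plan is to recast mutually orthogonal gerechte designs as mutually orthogonal $n$-colorings of a suitable graph and then apply the clique bound of Theorem~\ref{Clique Bound Theorem} to a carefully chosen pair of \emph{disjoint} cliques. First I would build the graph $G$ whose vertices are the $n^2$ cells, with two cells adjacent exactly when they share a row, a column, or a region $S_i$. A gerechte design for the partition is precisely a proper $n$-coloring of $G$ (each row, column, and region then receives all $n$ symbols), and two designs are orthogonal as designs if and only if they are orthogonal as colorings. Hence a set of mutually orthogonal gerechte designs is a set of mutually orthogonal $n$-colorings of $G$, and its cardinality is at most $N(G,n)$. Note that each row, each column, and each region is an $n$-clique of $G$.

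Next I would select the two cliques. By transposing the array I may assume the maximum intersection $j$ is attained by a row $R_0$ and a region $S_0$, so $|R_0\cap S_0|=j$ while every row or column meets every region in at most $j$ cells. Set $A=S_0$ and $B=R_0\setminus S_0$. Then $A$ is an $n$-clique, $B$ is an $(n-j)$-clique (a subset of a row), and $A\cap B=\emptyset$. The crucial observation is the adjacency count: every vertex $v\in B$ lies in row $R_0$, so $v$ is adjacent to each of the $j$ cells of $R_0\cap S_0$; since these cells lie in $S_0=A$ and are distinct from $v$ (as $v\notin S_0$), each vertex of $B$ is adjacent to at least $j$ vertices of $A$. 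I expect the design of this step to be the main obstacle: the natural candidates (a row and a region, or two rows) either fail to be disjoint or fail to guarantee the required number of neighbors, and the trick is to push the $j$ shared cells entirely into the region-clique $A$ so that each cell of the row-clique $B$ sees them through the common row.

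With these cliques, Theorem~\ref{Clique Bound Theorem} applies with $s=n$, $r=n-j$, and adjacency parameter $j$. The hypotheses $1<r,s\le n<r+s$ reduce to $n\ge 2$ and $j\le n-2$ (using $j<n$), and the bound evaluates cleanly:
\[
N(G,n)\le\left\lfloor\frac{r(s-j)}{r+s-n}\right\rfloor=\left\lfloor\frac{(n-j)(n-j)}{(n-j)+n-n}\right\rfloor=\left\lfloor\frac{(n-j)^2}{n-j}\right\rfloor=n-j.
\]

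Finally I would dispose of the boundary case $j=n-1$, which is excluded by the hypothesis $r>1$ and so needs a direct argument. Here $R_0\setminus S_0$ is a single cell $x$ and $S_0\setminus R_0$ is a single cell $y$. In any $n$-coloring the $n-1$ cells of $R_0\cap S_0$ carry distinct colors, leaving one unused color; since $x$ is adjacent (through $R_0$) and $y$ is adjacent (through $S_0$) to all of $R_0\cap S_0$, both $x$ and $y$ must take that last color, so $x$ and $y$ always receive the same color. Two cells that always share a color can never be consistent with orthogonality in two different colorings, so $N(G,n)\le 1=n-j$, completing this case and hence the proof.
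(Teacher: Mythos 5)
Your proof is correct and takes essentially the same route as the paper: the paper likewise takes the region $S_i$ as the $s$-clique ($s=n$) and $B=$ (row)$\setminus S_i$ as the $r$-clique ($r=n-j$), notes each vertex of $B$ meets $j$ vertices of $S_i$ through the shared row, and evaluates the clique bound to $n-j$. Your separate handling of the boundary case $j=n-1$, where the clique bound's hypothesis $r>1$ fails, is a detail the paper's proof silently omits, and your direct argument there is a sound and worthwhile addition.
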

\begin{proof}
Let $A$ be a row (or column). If $S_i$ intersects $A$ in $j$ cells $(j<n)$, then we let $B=A\setminus S_i$. Then with $r=|B|$, and $s=t=n$ the clique bound limits the number of mutually orthogonal gerechte designs of order $n$ with partition $S_1,\ldots,S_n$ to $\frac{|B|(n-j)}{|B|+n-n}=n-j$.
\end{proof}

\begin{corollary}\label{Sudoku Corollary}
The maximum size of a set of mutually orthogonal sudoku squares of order $n^2$ is $n^2-n$.
\end{corollary}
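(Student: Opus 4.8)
The plan is to prove the two inequalities $N(G,n^2)\le n^2-n$ and $N(G,n^2)\ge n^2-n$ separately, where $G$ denotes the graph underlying a sudoku square of order $n^2$: two cells are adjacent exactly when they lie in a common row, column, or $n\times n$ block. The upper bound is the point of interest and drops out of the machinery already in place, so I would dispatch it first; the matching construction is the genuinely hard part and is where the real work lies.

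For the upper bound I would invoke Corollary~\ref{Gerechte Corollary}, since a sudoku square of order $n^2$ is precisely a gerechte design of order $n^2$ whose regions are the $n^2$ sub-blocks of size $n\times n$. The only fact required is geometric: each row (and each column) meets each of the $n$ blocks in its band in exactly $n$ cells, because such a block spans $n$ consecutive columns and the row contributes one cell in each; hence the largest intersection of a row or column with a region is $j=n$. As $n<n^2$ for $n\ge 2$ the corollary applies and returns the value (order)$-j=n^2-n$. Unwinding this directly through Theorem~\ref{Clique Bound Theorem} as a check, one takes $A$ to be a row (an $n^2$-clique) and $B$ to be the cells of a meeting block that lie outside that row (an $(n^2-n)$-clique, each of whose vertices is adjacent to the $n$ cells where the block crosses the row); the clique bound then yields $\lfloor (n^2-n)(n^2-n)/((n^2-n)+n^2-n^2)\rfloor=n^2-n$.

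The harder direction is to exhibit $n^2-n$ mutually orthogonal sudoku squares, certifying that the bound is \emph{best possible}. For prime-power $n$ I would coordinatize over the field $\mathrm{GF}(n)$: index rows and columns by pairs in $\mathrm{GF}(n)\times\mathrm{GF}(n)$, the first coordinate naming the band (resp.\ stack) of blocks and the second the position within it, and take the symbol set to be $\mathrm{GF}(n)\times\mathrm{GF}(n)$ as well. Each candidate square is then a linear rule $(r,c)\mapsto Ar+Bc$ for $2\times 2$ matrices $A,B$ over $\mathrm{GF}(n)$; the row-, column-, and block-Latin conditions translate, respectively, into invertibility of $A$, of $B$, and of the matrix built from the within-block columns of $A$ and $B$, while orthogonality of two such squares translates into invertibility of the associated $4\times 4$ block matrix. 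The main obstacle is to assemble a family of $n^2-n$ matrix pairs meeting all of these conditions simultaneously; this is exactly the spread/affine-geometry construction of Bailey, Cameron, and Connelly~\cite{BCC08}, which I would cite (or reproduce) to finish. Combining the two directions gives that the maximum is exactly $n^2-n$.
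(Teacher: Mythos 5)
Your upper-bound argument is exactly the paper's. The published proof of Corollary~\ref{Sudoku Corollary} consists of the single observation that a sudoku square is a gerechte design whose upper-left block meets the top row in $n$ cells, so Corollary~\ref{Gerechte Corollary} (equivalently, the clique bound with $s=n^2$, $r=n^2-n$, $j=n$, and $n^2$ colors) gives at most $\lfloor (n^2-n)(n^2-n)/(n^2-n)\rfloor=n^2-n$; your unwinding of Theorem~\ref{Clique Bound Theorem} is that same computation. The divergence is in your second half: the paper does \emph{not} prove attainability inside the corollary, but only remarks afterwards that the bound is achieved when $n$ is a prime power, citing \cite{BCC08} and \cite{PV09}. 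Your $\mathrm{GF}(n)$ linear/spread construction is the right route to the matching family, but it too applies only to prime-power $n$, so ``combining the two directions'' does not establish equality for general $n$ (for $n=6$, say, this is not known). Read literally the corollary overclaims; read in the spirit of the section and of Table~\ref{short table}, it is an upper-bound statement, and on that reading your first paragraph alone is a complete proof that coincides with the paper's. If you do keep the constructive half, state the prime-power hypothesis explicitly rather than asserting unconditional equality.
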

\begin{proof}
A sudoku square is a gerechte design. The upper left block intersects the top row in $n$ cells, so the clique bound on the cardinality of a set of mutually orthogonal $n^2$-colorings is precisely $n^2-n$.
\end{proof}

When $n$ is a power of a prime the upper bound of Corollary~\ref{Sudoku Corollary} is always obtainable (see \cite{BCC08} or \cite{PV09} for constructions). We also note that most of the results of Corollary~\ref{MOLS Corollary} follow as a corollary to Theorem~\ref{Clique Bound Theorem}.

\subsection{Latin Rectangles}

Let $m\le n$. An $m\times n$ \emph{latin rectangle}\label{D: latin rectangle} is defined to be an $m\times n$ array based on $n$ symbols such that no symbol occurs more than once in any row or column. Latin rectangles are a natural extension of the notion of latin squares, and they have been studied extensively \cite{DK74,LM98}.

Two $m\times n$ latin rectangles are said to be \emph{orthogonal} if, when superimposed, the $mn$ ordered pairs are distinct. An $m\times n$ latin square is precisely an $n$-coloring of an $m\times n$ rook's graph, and orthogonality of latin rectangles corresponds precisely with mutually orthogonal $n$-colorings of those graphs. 

It is customary to let $N(m,n)$\label{N(m,n)} be the maximum cardinality of a set of mutually orthogonal latin rectangles (\emph{MOLR}) of size $m\times n$. The following bound for mutually orthogonal latin rectangles is well-known \cite{LM98}.

\begin{corollary}\label{N(m,n) Bound} 
$N(m,n)\le n-1$.
\end{corollary}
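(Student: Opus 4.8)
The plan is to deduce the bound directly from the clique bound (Theorem~\ref{Clique Bound Theorem}), since in an $m\times n$ rook's graph each row is an $n$-clique. I note first that the subgraph remark does \emph{not} help here: the $m\times n$ rook's graph is a subgraph of the $n\times n$ rook's graph for $m\le n$, so it only yields $N(m,n)\ge N(n,n)$, which is the wrong direction for an upper bound. I also restrict to $m\ge2$; when $m=1$ each rectangle is a permutation of the $n$ symbols and any two such rows are automatically orthogonal (their first coordinates are already distinct), so no bound of the form $n-1$ holds and the statement is understood for rectangles with at least two rows.

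With $m\ge2$, I would let $A$ be the $n$ cells of the first row and $B$ be the $n$ cells of the second row. Cells in a common row are pairwise adjacent, so $A$ and $B$ are $n$-cliques, and they are disjoint since they lie in distinct rows. In the notation of Theorem~\ref{Clique Bound Theorem} this sets $r=s=n$, and the hypotheses $1<r,s\le n<r+s$ become $1<n<2n$, valid for all $n\ge2$. A cell of $B$ in column $c$ shares no row with any cell of $A$ and shares a column only with the unique cell of $A$ in column $c$; hence every vertex of $B$ is adjacent to exactly one vertex of $A$, and I may take $j=1$.

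Substituting these values into the clique bound gives
\[
N(m,n)\le\left\lfloor\frac{r(s-j)}{r+s-n}\right\rfloor=\left\lfloor\frac{n(n-1)}{n}\right\rfloor=n-1.
\]
The only real work is verifying that the chosen cliques meet the theorem's hypotheses and that the single-row case is set aside; the arithmetic is immediate. As an alternative not relying on the machinery, one could run the classical normalization argument: permute symbols so that every rectangle has first row $1,2,\dots,n$, observe that the entry in cell $(2,1)$ must avoid the color $1$ already used in its column, and check that if two rectangles agreed there on a value $a$ then the pair $(a,a)$ would occur both in cell $(2,1)$ and in cell $(1,a)$, contradicting orthogonality; the $(2,1)$-entries therefore form distinct elements of the $(n-1)$-element set $\{2,\dots,n\}$, which again caps the set at $n-1$ rectangles.
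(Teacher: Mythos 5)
Your argument is correct, but it takes a different route from the paper: the paper derives $N(m,n)\le n-1$ from the average-degree bound (Theorem~\ref{Average-Degree Bound Theorem}), computing $D=m+n-2$ and $v=mn$ for the $m\times n$ rook's graph and evaluating $\left\lfloor\frac{(mn-(m+n-2)-1)n}{mn-n}\right\rfloor=n-1$, whereas you invoke the clique bound (Theorem~\ref{Clique Bound Theorem}) with two rows as disjoint $n$-cliques, $r=s=n$ and $j=1$. Your route is legitimate and is in fact the one the paper itself uses one step later for the more general Corollary~\ref{(r,s,t) Corollary} on latin rectangles of type $(r,s,t)$, of which this corollary is noted to be the special case $t=s$; so your proof essentially anticipates that generalization and relies only on local structure (two rows), while the paper's choice of the average-degree bound here is a global edge-count computation that does not extend as cleanly to $t>s$. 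Your side observations are also sound: the subgraph remark indeed points the wrong way for an upper bound, and the case $m=1$ must be excluded (the paper's own proof also silently needs $m\ge2$, since the average-degree bound requires $n<v=mn$, and the paper later records $N(1,s)=\infty$). Your closing elementary normalization argument via the $(2,1)$ entries is the classical proof and is also correct, though it is not the paper's.
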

\begin{proof}
In the underlying graph, $G$, for an $m\times n$ latin rectangle each vertex shares a row with $n-1$ vertices and column with $m-1$ vertices. Thus we may apply the average-degree bound with $D=m+n-2$ and $v=mn$. Then
\[
N(G,n)\le\left\lfloor\frac{(mn-(m+n-2)-1)n}{mn-n}\right\rfloor=n-1.\qedhere
\]
\end{proof}

For $r\le s\le t$ a \emph{latin rectangle of type $(r,s,t)$} is an $r\times s$ array filled with $t$ distinct symbols such that no symbol occurs more than once in any row or column. 
 
Latin rectangles of type $(r,s,t)$ occur quite naturally. Any $r\times s$ rectangle inside a latin square is a latin rectangle of type $(r,s,t)$ for some $t$. A latin rectangle of type $(n,n,n)$ is precisely a latin square, and a latin rectangle of type $(r,s,s)$ is an $r\times s$ latin rectangle. 

Let $N(r,s,t)$\label{N(r,s,t)} be the maximum size of a set of MOLR of type $(r,s,t)$.  Then $N(r,s)=N(r,s,s)$ because each $r\times s$ latin rectangle is based on $s$ symbols. The bound $N(1,s)=N(1,s,t)=\infty$ is trivial because any two $1\times s$ latin rectangles are orthogonal.  Therefore, we shall assume that $r>1$.

First we note some obvious bounds. Clearly $N(r,s,t)\ge N(r,s)$ because a rectangle based on $s$ symbols is also based on $t$ symbols because $s\le t$. In fact with $r$ and $s$ fixed, $N(r,s,t)$ is a nondecreasing function of $t$. With $s$ and $t$ fixed (and $r<s$) $N(r,s,t)$ is a nonincreasing function of $r$ because a set of mutually orthogonal $(r+1)\times s$ rectangles yields a set of mutually orthogonal $r\times s$ rectangles by deleting a row. Similarly, with $r$ and $t$ fixed (and $r\le s\le t$) $N(r,s,t)$ is a nondecreasing function of $s$ because a set of $r\times(s+1)$ MOLR yields a set of $r\times s$ MOLR by deleting a column.

The following result is a direct application of the clique bound of Theorem~\ref{Clique Bound Theorem}.

\begin{corollary}\label{(r,s,t) Corollary}
When $t<2s$ and $r>1$, 
\[
N(r,s,t)\le\frac{s^2-s}{2s-t}.
\]
\end{corollary}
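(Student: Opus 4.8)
The plan is to realize the underlying graph of an $(r,s,t)$ latin rectangle as the $r\times s$ rook's graph $G$, whose $rs$ vertices carry $t$ colors, so that the relevant color count is $n=t$, and then to invoke the clique bound of Theorem~\ref{Clique Bound Theorem}. Two vertices of $G$ are adjacent exactly when they lie in a common row or column, so each of the $r$ rows is an $s$-clique and each of the $s$ columns is an $r$-clique. Since $r>1$, the rectangle has at least two rows, and any two distinct rows are vertex-disjoint; I would take $A$ and $B$ to be two such rows, each an $s$-clique.

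Next I would determine the cross-adjacency parameter $j$. A vertex $v\in B$ lies in a unique column, and the only vertex of $A$ sharing a column (hence an edge) with $v$ is the one in that same column; since $A\neq B$ they share no row. Thus every vertex of $B$ is adjacent to exactly one vertex of $A$, which forces $j=1$. (Columns would not serve here, since the analogous hypothesis would require $t<2r$, which need not follow from $t<2s$.)

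I would then apply Theorem~\ref{Clique Bound Theorem} with both of its clique-size parameters taken equal to our $s$, its color count taken equal to our $t$, and $j=1$. The hypotheses $1<s\le t<2s$ must be checked against the corollary's assumptions: $t<2s$ is assumed, $s\le t$ holds because $r\le s\le t$, and $s>1$ follows from $1<r\le s$. The theorem then gives
\[
N(r,s,t)=N(G,t)\le\left\lfloor\frac{s(s-1)}{s+s-t}\right\rfloor\le\frac{s^2-s}{2s-t},
\]
which is precisely the claimed inequality (the floor is discarded at the last step since the quantity only decreases).

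Because this is a direct specialization of an already-proved bound, there is no substantive obstacle beyond bookkeeping. The one point demanding care is the notation clash between the $r,s,t$ of the corollary and the $r,s,n$ of Theorem~\ref{Clique Bound Theorem}: one must substitute our $s$ into \emph{both} of the theorem's clique-size slots and our $t$ into its color slot, and then re-verify each of the theorem's inequalities in the corollary's variables.
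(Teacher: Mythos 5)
Your proof is correct and follows essentially the same route as the paper: take two rows of the $r\times s$ rook's graph as disjoint $s$-cliques, observe that each vertex of one is adjacent to exactly one vertex of the other so $j=1$, and apply the clique bound with both clique-size parameters equal to $s$ and color count $t$. Your explicit verification of the theorem's hypotheses $1<s\le t<2s$ is a welcome bit of extra care that the paper leaves implicit.
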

\begin{proof}
The top two rows of an $r\times s$ rectangle are $s$-cliques, $A$ and $B$. Each vertex in $B$ shares a column with a vertex of $A$, so we may take $r=s$ and $j=1$.  The clique bound is $\frac{s^2-s}{2s-t}$.
\end{proof}

Note that we get the bound $N(r,s,s)\le s-1$ of Corollary~\ref{N(m,n) Bound} as a special case of Corollary~\ref{(r,s,t) Corollary}. Also noteworthy is the case when $t=s+1$ where the maximum possible size of a collection of mutually orthogonal latin rectangles of type $(r,s,s+1)$ is $s$. In particular, when $s$ is one less than a power of a prime the bound is tight because we can use a complete set of $s$ MOLS of order $s+1$ to obtain $s$ mutually orthogonal latin rectangles of type $(r,s,s+1)$ by deleting $s+1-r$ rows and $1$ column from each square.

\subsection{Orthogonal Partial Latin Squares}
A \emph{partial latin square of order $n$} is an $n\times n$ of (possibly empty) cells such that each of $n$ distinct symbols occurs at most once in each row and column. Thus a partial latin square is a partial $n$-coloring of an $n\times n$ rook's graph. Two partial latin squares are said to be orthogonal if whenever two cells share a symbol in one partial latin square, then in the other partial latin square the two cells do not share the same symbol (or at least one of the two cells is empty).

Mutually orthogonal latin polyominoes were studied in \cite{AG96} by Abdel-Ghaffar, motivated by an application to computer databases. In particular, Abdel-Ghaffar defined $M_n(p)$ to be the maximum number of mutually orthogonal partial latin squares of order $n$ with the same $p$ cells filled. Theorem~1 of \cite{AG96} gives the bound
\[
M_n(p)\le\left\lfloor\frac{p(p-1)}{\lfloor p/n\rfloor(2p-n-n\lfloor p/n\rfloor)}\right\rfloor-2.
\]
It can be showed that this bound for $M_n(p)$ agrees precisely with the edge bound for a partial latin square of order $n$ with $p$ filled cells.

\subsection{Lower Bounds for $N(G,n)$}\label{sec low bounds}
While upper bounds for $N(G,n)$ are nice, it would be ideal to have some lower bounds for $N(G,n)$. To find a lower bound for $N(G,n)$ we look to the work of Caro and Yuster. In \cite{CY99} Caro and Yuster studied the the $k$-orthogonal chromatic number $O\!\chi_k(G)$, which is the minimum number, $n$, such that $N(G,n)\ge k$. They obtained upper bounds for $O\!\chi_k(G)$ that depend on parameters of the graph such as the maximum degree of a vertex or the chromatic number $\chi(G)$. Perhaps the most useful result from \cite{CY99} is Theorem~1.1, part of which we display below.
\begin{theorem}[Part of Theorem 1.1 of \cite{CY99}] \label{CY}
Let $G$ be a graph with $v$ vertices, and with maximum degree $\Delta$. For $k\ge2$ the following upper bound holds:
\[
O\!\chi_k(G)\le \min\left\{2\sqrt{k-1}\max\{\Delta,\sqrt{v}\},(k-1)\left\lceil\frac{v}{\Delta+1}\right\rceil+\Delta\right\}.
\]
\end{theorem}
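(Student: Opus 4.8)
The plan is to use the reformulation that a family of $k$ mutually orthogonal $n$-colorings of $G$ is the same data as a single map $\phi\colon V(G)\to\{1,\dots,n\}^k$ whose $i$-th coordinate $C_i$ is a proper coloring of $G$ for every $i$, and such that for every pair $i\neq j$ the combined map $u\mapsto(C_i(u),C_j(u))$ is injective. The injectivity is exactly orthogonality: if two vertices share the color $C_i$ they must receive different colors under $C_j$. Equivalently, the vectors $\{\phi(u):u\in V(G)\}$ form a length-$k$ code over an $n$-letter alphabet in which any two codewords agree in at most one coordinate (Hamming distance at least $k-1$), with each coordinate a proper coloring. Since the asserted bound is a minimum of two expressions, I would prove the theorem by exhibiting, for each expression $n$, a family of $k$ mutually orthogonal $n$-colorings; the two expressions call for two genuinely different constructions.

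For the second term $(k-1)\lceil v/(\Delta+1)\rceil+\Delta$ I would build $C_1,\dots,C_k$ one at a time, maintaining the invariant that each $C_i$ is \emph{equitable}, so that every color class has size at most $t:=\lceil v/(\Delta+1)\rceil$. To adjoin $C_{m+1}$ orthogonal to $C_1,\dots,C_m$, form the conflict graph $H_{m+1}$ on $V(G)$ whose edges are the edges of $G$ together with every pair of vertices sharing a color in some earlier $C_i$; a proper coloring of $H_{m+1}$ is precisely a proper coloring of $G$ that is orthogonal to all of $C_1,\dots,C_m$. By the invariant each earlier $C_i$ contributes at most $t-1$ extra neighbors per vertex, so $\Delta(H_{m+1})\le\Delta+m(t-1)$. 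Applying the Hajnal--Szemer\'edi equitable-coloring theorem to $H_{m+1}$ yields an equitable proper coloring using $\Delta(H_{m+1})+1\le\Delta+(k-1)(t-1)+1$ colors, with all classes of size at most $\lceil v/(\Delta(H_{m+1})+1)\rceil\le t$, which both realizes $C_{m+1}$ and restores the invariant. The largest palette is used by $C_k$ and has size at most $(k-1)t+\Delta$, giving the bound.

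For the first term $2\sqrt{k-1}\max\{\Delta,\sqrt v\}$ I would make orthogonality automatic by drawing colors from a finite field $K$ of prime-power order $q$ with $q$ near $\max\{\Delta,\sqrt v\}$, assigning distinct vertices distinct affine functions $p_u(\theta)=a_u\theta+b_u$, and setting $C_i(u)=p_u(\theta_i)$ for fixed distinct evaluation points $\theta_1,\dots,\theta_k\in K$. Two distinct affine functions agree in at most one point, so any two codewords agree in at most one coordinate; this MDS (Reed--Solomon) structure secures all orthogonality relations at once, as soon as $q^2\ge v$ so that enough distinct affine functions exist—this is where the floor $\sqrt v$ enters. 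What remains is properness: the edge $\{u,w\}$ fails in coordinate $i$ exactly when $\theta_i$ is the unique root of $p_u-p_w$, an event depending only on the functions assigned to $u$ and $w$, hence local. I would therefore assign the affine functions at random and invoke the Lov\'asz Local Lemma, using that each properness event has probability of order $k/q$ and conflicts with only $O(\Delta)$ others.

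The main obstacle is this first construction, and specifically extracting the sharp shape $2\sqrt{k-1}\max\{\Delta,\sqrt v\}$ rather than a cruder bound. A union bound over the $\Theta(v\Delta)$ edges is hopeless, and even a symmetric Lov\'asz Local Lemma applied directly yields a factor \emph{linear} in $k$ (from the width-$k$ forbidden set of evaluation points) instead of the claimed $\sqrt{k-1}$. Recovering the square root, together with the clean constant $2$, appears to require a sharper, probably two-stage or asymmetrically weighted, probabilistic argument that balances the local degree-$\Delta$ properness constraints against the global $\sqrt v$ injectivity floor; this balancing is the crux and the step I expect to be the most delicate. The second construction, by contrast, is essentially routine once the Hajnal--Szemer\'edi theorem is in hand.
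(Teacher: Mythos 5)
First, a point of comparison that is moot: the paper does not prove this statement at all --- it is quoted verbatim as Theorem~1.1 of \cite{CY99}, so there is no in-paper argument to measure yours against. The closest the paper comes is Theorem~\ref{Supergraph Theorem} (extending $C_1,\ldots,C_m$ is equivalent to properly coloring the supergraph $G_{C_1,\ldots,C_m}$), which is precisely the engine of Caro and Yuster's actual proof. Your treatment of the second term of the minimum is correct and is essentially their argument: build the colorings one at a time, keep each equitable via Hajnal--Szemer\'edi so that every class has size at most $t=\lceil v/(\Delta+1)\rceil$, note that the conflict graph after $m$ steps has maximum degree at most $\Delta+m(t-1)$, and color it equitably again; the bookkeeping $\Delta+(k-1)(t-1)+1\le(k-1)t+\Delta$ for $k\ge2$ closes it, and regarding all $k$ colorings as drawn from the final, largest palette is harmless.

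The genuine gap is the first term, and you have named it yourself: the Reed--Solomon/Lov\'asz Local Lemma route does not produce the $\sqrt{k-1}$ factor, and you offer no way to recover it. The missing idea is that no second construction is needed: the first bound follows from the \emph{same} equitable iteration, run with a fixed palette of $C=\lceil 2\sqrt{k-1}\,x\rceil$ colors, where $x=\max\{\Delta,\sqrt v\}$. With $C$ colors every equitable class has size at most $\lceil v/C\rceil$, so the conflict graph before step $m+1$ has maximum degree at most $\Delta+(k-1)(\lceil v/C\rceil-1)$, and Hajnal--Szemer\'edi applies again with the \emph{same} palette provided $C\ge\Delta+(k-1)(\lceil v/C\rceil-1)+1$; up to rounding this is the quadratic inequality $C^2\ge\Delta C+(k-1)v$, whose positive root is $\tfrac12\bigl(\Delta+\sqrt{\Delta^2+4(k-1)v}\bigr)\le\tfrac12 x\bigl(1+\sqrt{1+4(k-1)}\bigr)\le x\bigl(1+\sqrt{k-1}\bigr)\le 2x\sqrt{k-1}$, using $\Delta\le x$, $v\le x^2$, and $k\ge2$. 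So the constant $2$ and the square root come from solving a quadratic in the palette size, not from any probabilistic balancing. The affine-function scheme, by contrast, manufactures pairwise orthogonality but has no mechanism for respecting the edges of $G$ at this color budget, which is why the LLL step stalls; I would drop it entirely.
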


We can obtain a lower bound for $N(G,n)$ by reversing engineering the bound of Theorem~\ref{CY}.
\begin{corollary} \label{CY-corollary}
Let $G$ be a graph with $v$ vertices and maximum degree $\Delta$. If $N(G,n)\ge2$, then
\[
N(G,n)\ge\max\left\{\left(\frac{n}{2\max\{\Delta,\sqrt{v}\}}\right)^2+1,\frac{n-\Delta}{\lceil\frac{v}{\Delta+1}\rceil}+1\right\}.
\]
\end{corollary}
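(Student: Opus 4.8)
The plan is to derive Corollary~\ref{CY-corollary} directly from Theorem~\ref{CY} by algebraic inversion. The key observation is that $O\!\chi_k(G)$ and $N(G,n)$ are, in a precise sense, inverse functions: by definition $O\!\chi_k(G)$ is the least $n$ with $N(G,n)\ge k$, and $N(G,n)$ is nondecreasing in $n$. Hence for any fixed $n$, if we set $k=N(G,n)$, then $O\!\chi_k(G)\le n$ (this is exactly the inequality $O\!\chi_{N(G,n)}(G)\le n$ already noted in the introduction). The strategy is to feed $k=N(G,n)$ into the upper bound of Theorem~\ref{CY}, obtaining an inequality of the form $n\ge O\!\chi_k(G)\ge(\text{something involving }k)$ for the relevant branch of the minimum, and then solve that inequality for $k$.

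First I would fix $n$ and write $k=N(G,n)$, assuming $k\ge2$ so that Theorem~\ref{CY} applies. Since the theorem bounds $O\!\chi_k(G)$ by a minimum of two quantities, and since $O\!\chi_k(G)\le n$ gives no immediate information in the wrong direction, the correct move is to note that because $O\!\chi_k(G)$ is at most each of the two expressions, we cannot directly bound $n$ below by either. Instead I would use that $k=N(G,n)\ge k'$ whenever $n\ge O\!\chi_{k'}(G)$; equivalently, for each branch of the minimum, if $n$ is at least that branch's value then $N(G,n)\ge k$. So I treat each of the two expressions in the minimum separately as an upper bound for $O\!\chi_k(G)$, set it $\le n$, and solve for $k$ in terms of $n$, $\Delta$, and $v$. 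This yields two lower bounds for $k=N(G,n)$, and since both must hold we may take their maximum.

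For the first branch, the inequality $2\sqrt{k-1}\max\{\Delta,\sqrt{v}\}\le n$ would (upon squaring and isolating $k-1$) produce $k-1\le\bigl(n/(2\max\{\Delta,\sqrt v\})\bigr)^2$; reading this as a statement that guarantees $N(G,n)$ reaches the corresponding value gives the first term $\bigl(n/(2\max\{\Delta,\sqrt v\})\bigr)^2+1$. For the second branch, $(k-1)\lceil v/(\Delta+1)\rceil+\Delta\le n$ rearranges to $k-1\le(n-\Delta)/\lceil v/(\Delta+1)\rceil$, giving the second term $(n-\Delta)/\lceil v/(\Delta+1)\rceil+1$. Taking the maximum over the two branches produces exactly the claimed bound.

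The main obstacle — and the step requiring care — is the logical direction of the inversion. Theorem~\ref{CY} is an \emph{upper} bound on $O\!\chi_k(G)$, whereas we want a \emph{lower} bound on $N(G,n)$, so a naive substitution points the wrong way. The resolution rests on the monotonicity/duality between the two functions: from $O\!\chi_k(G)\le(\text{branch value})$ one concludes that as soon as $n$ meets that branch value, $G$ admits $k$ mutually orthogonal $n$-colorings, i.e. $N(G,n)\ge k$. I would state this duality explicitly as the pivotal lemma-style remark before performing the two algebraic solves, since the rest is routine rearrangement of inequalities and the floor/ceiling bookkeeping is mild.
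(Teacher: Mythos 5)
Your argument is correct and is precisely the ``reverse engineering'' of Theorem~\ref{CY} that the paper intends (the paper offers no proof beyond that remark): you correctly handle the direction issue by using the duality that $O\!\chi_k(G)\le n$ forces $N(G,n)\ge k$, and then solving each branch of the minimum for $k$. The one caveat, which is a defect of the corollary's statement rather than of your derivation, is that since $k$ ranges over integers $\ge 2$ the argument literally yields $N(G,n)\ge\lfloor x\rfloor$ for each branch value $x$, not $N(G,n)\ge x$; as $N(G,n)$ is an integer the stated bound without floors is slightly stronger than what the substitution delivers when $x$ is not an integer.
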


The lower bound of Corollary~\ref{CY-corollary} is of use only when the maximum degree $\Delta$ is smaller than $n$. Theorems~1.2--1.4 of \cite{CY99} give good bounds on $O\!\chi_k(G)$  when the chromatic number is small or if the graph, $G$, has a special structure like a $t$-partite graph or a $d$-degenerate graph.  Theorems~1.2--1.4 of \cite{CY99} can also be reverse engineered to obtain lower bounds for $N(G,n)$, though they do not apply as widely, nor are they easy to write.

\subsection{Lower Bounds for Orthogonal Chromatic Numbers}\label{Orthogonal Chromatic Number Section}
 In this section we note that we can solve for $n$ in each the four bounds for $N(G,n)$ that we have introduced, and thus find lower bounds for $O\!\chi_k(G)$.

The following corollary is obtained from the average-degree bound of Theorem~\ref{Average-Degree Bound Theorem}.
\begin{corollary} \label{avg-deg corollary}
Let $G$ be a graph with $v$ vertices with average degree $D$. Then 
\[
O\!\chi_k(G)\ge \left\lceil\frac{vk}{v-D-1+k}\right\rceil.
\]
\end{corollary}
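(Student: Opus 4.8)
The plan is to reverse-engineer the average-degree bound of Theorem~\ref{Average-Degree Bound Theorem}. Recall that $O\!\chi_k(G)$ is defined as the minimum number $n$ for which $N(G,n)\ge k$. So if I can show that $N(G,n)\ge k$ forces $n$ to be large, I obtain a lower bound on $O\!\chi_k(G)$. The strategy is therefore to take the inequality furnished by the average-degree bound, namely
\[
N(G,n)\le\left\lfloor\frac{(v-D-1)n}{v-n}\right\rfloor,
\]
and solve the resulting inequality $k\le\tfrac{(v-D-1)n}{v-n}$ for $n$.

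First I would argue that whenever $N(G,n)\ge k$ (which is exactly the situation defining $O\!\chi_k$), the floor bound gives $k\le N(G,n)\le\frac{(v-D-1)n}{v-n}$, where I may drop the floor since $k$ is an integer no larger than the floored quantity and hence no larger than the quantity itself. Here I should assume $n<v$ so that the denominator $v-n$ is positive and the average-degree bound applies; this is harmless since $O\!\chi_k(G)\le v$ always (one can give every vertex a distinct color). Next I would clear denominators: multiplying through by the positive quantity $v-n$ yields $k(v-n)\le(v-D-1)n$, i.e. $kv-kn\le(v-D-1)n$. Collecting the $n$ terms gives $kv\le(v-D-1+k)n$, and dividing by the positive coefficient $v-D-1+k$ produces
\[
n\ge\frac{vk}{v-D-1+k}.
\]
Since this holds for every $n$ with $N(G,n)\ge k$, it holds in particular for the minimal such $n$, namely $O\!\chi_k(G)$; because $O\!\chi_k(G)$ is an integer I may take the ceiling to conclude $O\!\chi_k(G)\ge\lceil\frac{vk}{v-D-1+k}\rceil$.

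The main obstacle is not the algebra, which is routine, but the sign and positivity bookkeeping: I must verify that $v-n>0$ so that the average-degree bound is valid and that multiplying by $v-n$ preserves the direction of the inequality, and I must verify that $v-D-1+k>0$ so that dividing by it preserves the inequality. The latter holds because $k\ge1$ and $v-D-1\ge0$ (the average degree $D$ satisfies $D\le v-1$ for any simple graph on $v$ vertices, with equality only for the complete graph). I would note explicitly that in the degenerate case $v-D-1=0$, i.e. $G=K_v$, the formula still reads $O\!\chi_k(G)\ge\lceil\frac{vk}{k}\rceil=v$, which is correct since the complete graph admits no two distinct colors sharing a vertex and thus needs $v$ colors even for a single proper coloring that has orthogonal mates. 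The final care is the rounding step: the passage from a real lower bound on $n$ to a ceiling is justified precisely because $O\!\chi_k(G)$ is an integer at least as large as the real bound.
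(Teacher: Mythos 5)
Your proposal is correct and follows exactly the route the paper intends: the paper states that Corollary~\ref{avg-deg corollary} "is obtained from the average-degree bound of Theorem~\ref{Average-Degree Bound Theorem}" by solving for $n$, which is precisely your algebra. Your extra bookkeeping (positivity of $v-n$ and $v-D-1+k$, the boundary case $O\!\chi_k(G)=v$ where the theorem's hypothesis $n<v$ fails but the bound holds trivially since the right-hand side never exceeds $v$) is sound and only makes explicit what the paper leaves implicit.
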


The next corollary is obtained from the clique bound of Theorem~\ref{Clique Bound Theorem}.
\begin{corollary}\label{clique corollary}
Let $G$ be a graph with an $s$-clique, $A$, that is disjoint from an $r$-clique, $B$, such that each vertex in $B$ is adjacent to at least $j$ vertices in $A$. If $O\!\chi_k(G)<r+s$, then we get the bound
\[
O\!\chi_k(G)\ge \left\lceil r+s+\frac{rj-rs}{k}\right\rceil
\]
\end{corollary}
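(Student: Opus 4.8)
The plan is to reverse-engineer the clique bound of Theorem~\ref{Clique Bound Theorem}, exactly as Corollary~\ref{avg-deg corollary} reverses the average-degree bound. I would set $n=O\!\chi_k(G)$. By the definition of the $k$-orthogonal chromatic number, $n$ is the least number of colors for which $N(G,n)\ge k$, so in particular $N(G,n)\ge k$.

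Before invoking the clique bound I would verify that its hypotheses hold for this value of $n$. The assumption $O\!\chi_k(G)<r+s$ gives $n<r+s$, so $r+s-n>0$. For the inequalities $r\le n$ and $s\le n$, note that $N(G,n)\ge k\ge 1$ forces the existence of at least one proper $n$-coloring of $G$; since $G$ contains an $r$-clique and an $s$-clique, any proper coloring of $G$ uses at least $\max\{r,s\}$ colors, whence $n\ge\max\{r,s\}$. Together with the standing assumption $r,s>1$ inherited from Theorem~\ref{Clique Bound Theorem}, all of the hypotheses $1<r,s\le n<r+s$ are satisfied.

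Now I would apply the clique bound and discard the floor:
\[
k\le N(G,n)\le\left\lfloor\frac{r(s-j)}{r+s-n}\right\rfloor\le\frac{r(s-j)}{r+s-n}.
\]
Since $r+s-n>0$, clearing the denominator gives $k(r+s-n)\le r(s-j)$, and solving for $n$ (dividing by $k>0$) yields $n\ge r+s+\frac{rj-rs}{k}$. Finally, because $n=O\!\chi_k(G)$ is an integer, I would replace the right-hand side by its ceiling to obtain the claimed bound.

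There is essentially no hard step here: the only point requiring care is the verification that $r\le n$ and $s\le n$, so that the clique bound is legitimately applicable, which I would handle via the chromatic-number observation above. The remaining work is the routine algebraic inversion of the bound, mirroring the proof of Corollary~\ref{avg-deg corollary}.
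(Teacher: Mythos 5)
Your proof is correct and follows exactly the route the paper intends: the paper offers no explicit argument for this corollary beyond noting that it is ``obtained from the clique bound,'' and your reverse-engineering (setting $n=O\!\chi_k(G)$, checking the hypotheses $1<r,s\le n<r+s$, and solving $k\le\frac{r(s-j)}{r+s-n}$ for $n$) is the intended derivation. Your explicit verification that $r,s\le n$ via the clique/chromatic-number observation is a detail the paper leaves implicit, and it is handled correctly.
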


We can also derive lower bounds for $O\!\chi_k(G)$ from the degree bound of Theorem~\ref{Degree Bound Theorem} and the edge bound of Theorem~\ref{Edge Bound Theorem}, but the formulas are not as clean.

\begin{example}
Consider the cube graph, $G$, whose vertices and edges are the corners and edges of a cube. $G$ has no pair of orthogonal 3 colorings. The degree bound is $N(G,3)\le4$, while the clique bound, average-degree bound, and edge bound each yield the inequality $N(G,3)\le 2$, so none of these bounds are sharp in this case. With 4 colors the average-degree bound and the edge bound are $N(G,4)\le 4$. Moreover, we can find 4 mutually orthogonal 4-colorings of $G$.
\[
\begin{tikzpicture}[scale=.35]
\SetVertexSimple[MinSize = 5pt,
LineWidth = .5pt,FillColor = white]
\tikzset{VertexStyle/.append style =
{inner sep = 0pt,%
outer sep = 0pt}}
\SetVertexNoLabel
\begin{scope}[scale=1.5]
\Vertex[x=0 ,y=0]{1}
\Vertex[x=2 ,y=0]{2}
\Vertex[x=2 ,y=2]{3}
\Vertex[x=0 ,y=2]{4}
\Vertex[x=1 ,y=1]{a}
\Vertex[x=3 ,y=1]{b}
\Vertex[x=3 ,y=3]{c}
\Vertex[x=1 ,y=3]{d}
\Edges(1,2,3,4,1,a,b,c,d,a)
\Edges(b,2)
\Edges(c,3)
\Edges(d,4)
\AddVertexColor{white}{1,3}
\AddVertexColor{Dark}{2,4}
\AddVertexColor{black}{a,c}
\AddVertexColor{Light}{b,d}
\end{scope}
\begin{scope}[scale=1.5,xshift=4cm]
\Vertex[x=0 ,y=0]{1}
\Vertex[x=2 ,y=0]{2}
\Vertex[x=2 ,y=2]{3}
\Vertex[x=0 ,y=2]{4}
\Vertex[x=1 ,y=1]{a}
\Vertex[x=3 ,y=1]{b}
\Vertex[x=3 ,y=3]{c}
\Vertex[x=1 ,y=3]{d}
\Edges(1,2,3,4,1,a,b,c,d,a)
\Edges(b,2)
\Edges(c,3)
\Edges(d,4)
\AddVertexColor{white}{1,d}
\AddVertexColor{Dark}{4,a}
\AddVertexColor{black}{2,c}
\AddVertexColor{Light}{3,b}
\end{scope}
\begin{scope}[scale=1.5,xshift=8cm]
\Vertex[x=0 ,y=0]{1}
\Vertex[x=2 ,y=0]{2}
\Vertex[x=2 ,y=2]{3}
\Vertex[x=0 ,y=2]{4}
\Vertex[x=1 ,y=1]{a}
\Vertex[x=3 ,y=1]{b}
\Vertex[x=3 ,y=3]{c}
\Vertex[x=1 ,y=3]{d}
\Edges(1,2,3,4,1,a,b,c,d,a)
\Edges(b,2)
\Edges(c,3)
\Edges(d,4)
\AddVertexColor{white}{1,b}
\AddVertexColor{Dark}{2,a}
\AddVertexColor{black}{3,d}
\AddVertexColor{Light}{4,c}
\end{scope}
\begin{scope}[scale=1.5,xshift=12cm]
\Vertex[x=0 ,y=0]{1}
\Vertex[x=2 ,y=0]{2}
\Vertex[x=2 ,y=2]{3}
\Vertex[x=0 ,y=2]{4}
\Vertex[x=1 ,y=1]{a}
\Vertex[x=3 ,y=1]{b}
\Vertex[x=3 ,y=3]{c}
\Vertex[x=1 ,y=3]{d}
\Edges(1,2,3,4,1,a,b,c,d,a)
\Edges(b,2)
\Edges(c,3)
\Edges(d,4)
\AddVertexColor{white}{1,c}
\AddVertexColor{Dark}{2,d}
\AddVertexColor{black}{3,a}
\AddVertexColor{Light}{4,b}
\end{scope}
\end{tikzpicture}
\]
Hence $N(G,4)=4$.

Corollary~\ref{avg-deg corollary} yields the following lower bounds on the orthogonal chromatic numbers $O\!\chi(G)\ge 3$ and $O\!\chi_4(G)\ge O\!\chi_3(G)\ge 4$.
The facts stated in the previous paragraph confirm that $O\!\chi(G)=O\!\chi_3(G)=O\!\chi_4(G)=4$.

It is worth noting that  we can obtain upper bounds on the $k$-orthogonal chromatic numbers of $G$ from \cite{CY99}, namely $O\!\chi(G)\le 4$ (by Theorem~1.3 of \cite{CY99}) $O\!\chi_3(G)\le 6$ (by Theorem~1.2 of \cite{CY99}) and $O\!\chi_4(G)\le 9$ (by Theorem~1.1 or Theorem~1.4 of \cite{CY99}).
\end{example}

\section{Proofs of the Main Theorems}\label{Section: Proofs}
\subsection{The Degree Bound}
\begin{proof}[Proof of Theorem~\ref{Degree Bound Theorem}]
Let $G$ be a graph of order $n^2-m$ where $0\le m< n-1$, and let $u$ be a vertex of degree $\Delta$. We must show
\[
N(G,n)\le \left\lfloor\frac{n^2-m-\Delta-1}{n-m-1}\right\rfloor.
\]

Let $C_1,\ldots,C_k$ be mutually orthogonal $n$-colorings of $G$. If $\Delta\ge n^2-n$ then by the generalized pigeon hole principle at least one color must be applied to more than $n$ vertices in $C_1$, so no coloring of $G$ may be orthogonal to $C_1$. Thus if $\Delta \ge n^2-n$, then $N(G,n)\le 1$.

Next assume that $k>1$, so that no color may be applied to more than $n$ vertices in any coloring $C_i$. Then, since no color may be applied to more than $n$ vertices, by the generalized pigeon hole principle there must be at least $n-m$ vertices that are assigned the color $C_i(u)$ in the coloring $C_i$. There are at most $n^2-m-\Delta-1$ vertices that can share a color with $u$ in any coloring $C_i$. The $n-m-1$ (or more) vertices of $G$ that share a color with vertex $u$ in $C_i$ cannot share a color with $u$ in $C_j$ whenever $i\ne j$, so we have the inequality
\[
k(n-m-1)\le n^2-m-\Delta-1.
\]
Solving for $k$ yields
\[
k\le \frac{n^2-m-\Delta-1}{n-m-1}.\qedhere
\]
\end{proof}

\subsection{The Clique Bound}
\begin{proof}[Proof of Theorem~\ref{Clique Bound Theorem}]
Let $r$, $s$, and $n$ be integers satisfying $1<r, s\le n<r+s$. Let $G$ be a graph with an $s$-clique, $A$, that is disjoint from an $r$-clique, $B$, such that each vertex in $B$ is adjacent to at least $j$ vertices in $A$. We must show that
\[
N(G,n)\le \left\lfloor\frac{r(s-j)}{r+s-n}\right\rfloor.
\]

Let $C_1,\ldots,C_k$ be mutually orthogonal $n$-colorings of $G$. We may assume without loss of generality that the vertices of $A$ are colored with the same colors, say $1,\ldots,s$, so that each pair of colors $(1,1),(2,2),\ldots,(s,s)$ occurs in $A$ for each pair of superimposed colorings $C_i,C_j$. Among the colorings $C_1,\ldots, C_k$, no vertex of $B$ can have any color in the set $\{1,\ldots,s\}$ more than once, so colors from the set $\{1,\ldots,s\}$ can be applied to a vertex of $B$ at most $s-j$ times across all the colorings $C_1,\ldots, C_k$. Thus, colors from the set $\{s+1,\ldots,n\}$ will occur at least $k-(s-j)$ times for each of the $r$ vertices of $B$ among the colorings $C_1,\ldots,C_k$. The total number of times that colors from the set $\{s+1,\ldots,n\}$ occur in $B$ among the colorings $C_1,\ldots,C_k$ is at least $r(k-s+j)$.

On the other hand, at most $n-s$ of the colors from the set $\{s+1,\ldots,n\}$ can occur in $B$ for each coloring $C_i$. Thus the total number occurences of a color from the set $\{s+1,\ldots,n\}$ in $B$ among the colorings $C_1,\ldots,C_k$ is at most $k(n-s)$. Therefore $r(k-s+j)\le m(n-s)$ so that
\[
k\le\frac{r(s-j)}{r+s-n}.\qedhere
\]
\end{proof}

\subsection{The Average-Degree Bound}
\begin{proof}[Proof of Theorem~\ref{Average-Degree Bound Theorem}]
Let $G$ be a graph of order $v$ with average vertex degree $D$.  We must show that for $n<v$,
\[
N(G,n)\le\left\lfloor\frac{(v-D-1)n}{v-n}\right\rfloor.
\] 

Let $C_1,\ldots,C_k$ be mutually orthogonal $n$-colorings of $G$. Note that if each vertex shares a color with $s$ vertices in each of the colorings $C_1,\ldots,C_k$ then $ks\le v-D-1$. Similarly if each vertex shares a color with $s$ vertices on average across the colorings $C_1,\ldots,C_k$, then $ks\le v-D-1$, so $k\le \frac{v-D-1}{s}$. We shall find a lower bound for $s$ and hence an upper bound for $k$.

 Fix $i$ and consider the coloring $C_i$. Let $l_j$ be the number of times that color $j$ is assigned to a vertex in $C_i$, so $l_1+\cdots+l_n=v$. Then $l_j$ vertices share a color with $l_j-1$ vertices, so on average across all vertices, each vertex shares a color in $C_i$ with $s$ vertices on average, where
\[
s=\frac{l_1(l_1-1)+l_2(l_2-1)+\cdots+l_n(l_n-1)}{nv}.
\]
Here $s$ is weighted average of the values $\frac{l_1-1}{n}$, $\frac{l_2-1}{n}$,\ldots, $\frac{l_n-1}{n}$, where the larger values are weighted more heavily, so
\[
s\ge\frac{l_1-1}{n}+\frac{l_2-1}{n}+\cdots+\frac{l_n-1}{n}=\frac{v}{n}-1.
\]
 Thus $\left(\frac{v}{n}-1\right)k+D \le v-1$ so
\[
k\le\frac{(v-D-1)n}{v-n}.\qedhere
\]
\end{proof}

\subsection{The Edge Bound}
\begin{proof}[Proof of Theorem~\ref{Edge Bound Theorem}]
Let $G$ be a graph with $v$ vertices and $e$ edges, $v>n$. We must show that if $r$ is the remainder when $v$ is divided by $n$, then 
\[
N(G,n)\le \left\lfloor \frac{\binom{v}{2}-e}{(n-r)\binom{\lfloor \frac{v}{n}\rfloor}{2}+r\binom{\lceil \frac{v}{n}\rceil}{2}}\right\rfloor.
\]

Let $C_1,\ldots,C_k$ be mutually orthogonal $n$-colorings of $G$. Each pair of vertices that share a color in $C_i$ must have distinct colors in $C_j$, $j\ne i$. Once each pair of vertices has shared a color in one of the $C_i$'s it is not possible to find an additional $n$-coloring orthogonal to each of the $C_i$'s.  Thus we need to identify the minimum number of pairs of vertices that will share a color in a given $C_i$.  If a color is applied to $m$ vertices, then that accounts for $\binom{m}{2}$ pairs. Therefore, if color $t$ is applied to $\mu_t$ vertices, then 
\begin{equation}\label{binomial sum}
\binom{\mu_1}{2}+\binom{\mu_2}{2}+\cdots+\binom{\mu_n}{2}
\end{equation}
pairs of vertices are accounted for in the coloring $C_i$. The condition $\mu_1+\cdots+\mu_n=v$ guarantees that the expression \eqref{binomial sum} is minimized when $n-r$ of the colors appear $\lfloor\frac{v}{n}\rfloor$ times and the other $r$ colors appear $\lceil\frac{v}{n}\rceil$ times. Therefore
\[
k\left[(n-r)\binom{\lfloor \frac{v}{n}\rfloor}{2}+r\binom{\lceil \frac{v}{n}\rceil}{2}\right]+e
\le \binom{v}{2}. 
\]
Solving for $k$ then yields the theorem.
\end{proof}

\section{Constructions with Orthogonal Colorings}\label{Section: Constructions} 
In this section we introduce a few graph structures that can be created from a set of orthogonal colorings.
\subsection{Coloring-Extended Supergraphs}\label{Coloring-Extended Supergraphs Section}
Given a graph $G$ and a collection of colorings $C_1,\ldots,C_k$ of $G$ we can define a graph $G_{C_1,\ldots,C_k}$,\label{GC1dotsCk} which we call the \emph{coloring-extended supergraph}\label{D: coloring-extended supergraph} to be the graph with vertex set $V(G)$, and distinct vertices $u$ and $v$ are adjacent in $G_{C_1,\ldots,C_k}$ if either $u\sim v$ in $G$ or $C_i(u)=C_i(v)$ for some $i$.
\begin{example}
Below we display two orthogonal colorings, $C_1$ and $C_2$, of a graph $G$ and the supergraph $G_{C_1,C_2}$.
\[
\begin{tikzpicture}[scale=.4]
\clip (-14,-3.5) rectangle (14,3);
\SetVertexSimple[MinSize = 5pt,
LineWidth = .5pt]
\tikzset{VertexStyle/.append style =
{inner sep = 0pt,%
outer sep = 0pt}}
\SetVertexNoLabel
\begin{scope}[xshift=-6cm]
\begin{scope}[rotate=45]
\Vertices[unit=2]{circle}{aa,bb,cc,ab,ba,ac,cb,bc}
\end{scope}
\coordinate (ca) at (0,0);
\Vertices[Node]{line}{ca}
\Edges(aa,bb,cc,ab,ba,ac,cb,bc,aa)
\Edges(bc,ca,ab)
\Edges(bb,ca,ac)
\AddVertexColor{white}{aa,ab,ac}
\AddVertexColor{Dark!80}{ba,bb,bc}
\AddVertexColor{black}{ca,cb,cc}
\draw (0,-2) node[below]{$C_1$};
\begin{scope}[xshift=6cm]
\begin{scope}[rotate=45]
\Vertices[unit=2]{circle}{aa,bb,cc,ab,ba,ac,cb,bc}
\end{scope}
\coordinate (ca) at (0,0);
\Vertices[Node]{line}{ca}
\Edges(aa,bb,cc,ab,ba,ac,cb,bc,aa)
\Edges(bc,ca,ab)
\Edges(bb,ca,ac)
\AddVertexColor{white}{aa,ba,ca}
\AddVertexColor{Dark!80!white}{ab,bb,cb}
\AddVertexColor{black}{ac,bc,cc}
\draw (0,-2) node[below]{$C_2$};
\end{scope}
\end{scope}
\begin{scope}[xshift=6cm]
\renewcommand{\VertexFillColor}{white}
\begin{scope}[rotate=45]
\Vertices[unit=2]{circle}{aa,bb,cc,ab,ba,ac,cb,bc}
\end{scope}
\coordinate (ca) at (0,0);
\Vertices[Node]{line}{ca}
\Edges(aa,bb,cc,ab,ba,ac,cb,bc,aa)
\Edges(bc,ca,ab)
\Edges(bb,ca,ac)
\Edges(aa,ab,ac,aa,ca,ba)
\Edges[style={bend left=100,looseness=2.2}](ba,aa)
\Edges(bb,bc,ba,bb,ab,cb,bb)
\Edges(cb,ca,cc,ac,bc,cc)
\Edges[style={bend left=100,looseness=2.2}](cc,cb)
\draw (0,-2) node[below]{$G_{C_1,C_2}$};
\end{scope}
\end{tikzpicture}
\]
\end{example}

\begin{theorem}[Caro and Yuster \cite{CY99}] \label{Supergraph Theorem}
Let $G$ be a graph with mutually orthogonal $n$-colorings $C_1,\ldots,C_k$. Then the collection, $C_1,\ldots,C_k$, of mutually orthogonal $n$-colorings of $G$ can be extended if and only if there exists an $n$-coloring of the graph $G_{C_1,\ldots,C_k}$.
\end{theorem}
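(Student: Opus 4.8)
The plan is to show that adjoining one coloring to the collection is literally the same thing as properly coloring the supergraph, so that the entire proof reduces to unwinding two definitions. First I would make the notion of extension precise: saying that $C_1,\ldots,C_k$ \emph{can be extended} means there is an $n$-coloring $C_{k+1}$ of $G$ that is proper on $G$ (adjacent vertices of $G$ receive distinct colors) and orthogonal to each $C_i$. Recalling that orthogonality of $C_{k+1}$ to $C_i$ asserts that whenever two distinct vertices $u,v$ satisfy $C_i(u)=C_i(v)$ they satisfy $C_{k+1}(u)\ne C_{k+1}(v)$, I would observe that the two requirements on $C_{k+1}$ can be merged into the single statement: for all distinct $u,v$, if $u\sim v$ in $G$ or $C_i(u)=C_i(v)$ for some $i$, then $C_{k+1}(u)\ne C_{k+1}(v)$.

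This merged condition is precisely the definition of a proper coloring of $G_{C_1,\ldots,C_k}$, since by construction $u$ and $v$ are adjacent in $G_{C_1,\ldots,C_k}$ exactly when $u\sim v$ in $G$ or $C_i(u)=C_i(v)$ for some $i$. For the forward direction, then, given an extension $C_{k+1}$, I read the merged condition as saying that $C_{k+1}$ assigns distinct colors to the two endpoints of every edge of $G_{C_1,\ldots,C_k}$, which is exactly an $n$-coloring of the supergraph. Conversely, given a proper $n$-coloring $C$ of $G_{C_1,\ldots,C_k}$, I would set $C_{k+1}=C$ and verify the two defining properties separately: every edge of $G$ is an edge of $G_{C_1,\ldots,C_k}$, so $C_{k+1}$ is proper on $G$; and whenever $C_i(u)=C_i(v)$ for distinct $u,v$, the pair $u,v$ is adjacent in $G_{C_1,\ldots,C_k}$, which forces $C_{k+1}(u)\ne C_{k+1}(v)$, and this is exactly orthogonality of $C_{k+1}$ to $C_i$. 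Hence $C_{k+1}$ extends the collection.

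There is no genuine obstacle here; the statement is essentially a reformulation, and the edges of $G_{C_1,\ldots,C_k}$ were defined precisely so that the equivalence holds. The only point requiring care is the correct reading of orthogonality, namely that "$u,v$ share a color in $C_i$ forces distinct colors in $C_{k+1}$" is symmetric in $C_i$ and $C_{k+1}$ (each direction is the contrapositive of the other), so that the single implication I use above is logically the full orthogonality condition and nothing is lost in the merge.
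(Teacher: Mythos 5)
Your proposal is correct and follows essentially the same route as the paper: both directions reduce to unwinding the definition of adjacency in $G_{C_1,\ldots,C_k}$ and the definition of orthogonality, and your explicit remark that the orthogonality condition is symmetric (each direction being the contrapositive of the other) is a valid and worthwhile point of care. The only cosmetic difference is that you merge properness on $G$ and orthogonality to each $C_i$ into a single implication before identifying it with properness on the supergraph, whereas the paper checks the two directions of the equivalence separately; the content is identical.
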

\begin{proof}
Let $K$ be an $n$-coloring of $G_{C_1,\ldots,C_k}$. Then $K$ is also an $n$-coloring of $G$, and $(K(u),C_i(u))=(K(v),C_i(v))$ implies that $K(u)=K(v)$ and $C_i(u)=C_i(v)$. However, the condition $C_i(u)=C_i(v)$ implies that $u\sim v$ in $G_{C_1,\ldots,C_k}$, so the only possible way to satisfy $K(u)=K(v)$ is if $u=v$. Thus $K$ is orthogonal to each $C_i$.

For the converse we suppose that $K$ is an $n$-coloring of $G$ that is orthogonal to each of $C_1,\ldots,C_k$. If $K(u)=K(v)$ then $u\not\sim v$ in $G$, and, since $K$ is orthogonal to $C_i$, we must have $C_i(u)\ne C_i(v)$ for each $i$. Therefore $K$ is also an $n$-coloring of $G_{C_1,\ldots,C_k}$.
\end{proof}
\begin{example}
Below we display orthogonal colorings $C$ and $K$ of a graph $G$.
\[ 
\begin{tikzpicture}[scale=.4]
\SetVertexSimple[MinSize = 5pt,
LineWidth = .5pt]
\tikzset{VertexStyle/.append style =
{inner sep = 0pt,%
outer sep = 0pt}}
\SetVertexNoLabel
\begin{scope}[xshift=-6.1cm]
\draw (3.1,.3) node {$\perp$};
\begin{scope}[rotate=45]
\Vertices[unit=2]{circle}{aa,bb,cc,ab,ba,ac,cb,bc}
\end{scope}
\coordinate (ca) at (0,0);
\Vertices[Node]{line}{ca}
\Edges(aa,bb,cc,ab,ba,ac,cb,bc,aa)
\Edges(bc,ca,ab)
\Edges(bb,ca,ac)
\AddVertexColor{white}{aa,ab,ac}
\AddVertexColor{Dark}{ba,bb,bc}
\AddVertexColor{black}{ca,cb,cc}
\draw (0,-2.2) node[below]{$C$};
\draw (-2,0)node[left]{$G:$};
\end{scope}
\begin{scope}[xshift=0cm]
\begin{scope}[rotate=45]
\Vertices[unit=2]{circle}{aa,bb,cc,ab,ba,ac,cb,bc}
\end{scope}
\coordinate (ca) at (0,0);
\Vertices[Node]{line}{ca}
\Edges(aa,bb,cc,ab,ba,ac,cb,bc,aa)
\Edges(bc,ca,ab)
\Edges(bb,ca,ac)
\AddVertexColor{white}{aa,ba,ca}
\AddVertexColor{Dark}{ab,bb,cb}
\AddVertexColor{black}{ac,bc,cc}
\draw (0,-2.2) node[below]{$K$};
\end{scope}
\begin{scope}[xshift=10cm]
\begin{scope}[rotate=45]
\Vertices[unit=2]{circle}{aa,bb,cc,ab,ba,ac,cb,bc}
\end{scope}
\coordinate (ca) at (0,0);
\Vertices[Node]{line}{ca}
\Edges(aa,bb,cc,ab,ba,ac,cb,bc,aa)
\Edges(bc,ca,ab)
\Edges(bb,ca,ac)
\Edges(aa,ab,ac,aa)
\Edges(bb,bc,ba,bb)
\Edges(cb,cc,ca)
\Edges[style={bend left=100,looseness=2.2}](cc,cb)
\AddVertexColor{white}{aa,ba,ca}
\AddVertexColor{Dark}{ab,bb,cb}
\AddVertexColor{black}{ac,bc,cc}
\draw (0,-2.2) node[below]{$K$};
\draw (-2,0)node[left]{$G_C:$};
\end{scope}
\end{tikzpicture}
\]
Note that $K$ is also a coloring of the graph $G_C$.
\end{example}

In \cite{CY99} Caro and Yuster made use of Theorem~\ref{Supergraph Theorem} to find an upper bound for the orthogonal chromatic number $O\!\chi(G)$.

\begin{remark}
When $n^2\ge|G|=v\ge\frac{n^2}{2}+2$ the clique bound can be applied to the graph $G_{C_1}$. In this case we obtain the bound
\[
N(G,n)\le 
\left\lfloor
\frac{
\left\lceil \frac{v}{n}\right\rceil
\left\lceil \frac{v-\left\lceil \frac{v}{n}\right\rceil}{n-1}\right\rceil
}
{
\left\lceil \frac{v}{n}\right\rceil +
\left\lceil \frac{v-\left\lceil \frac{v}{n}\right\rceil}{n-1}\right\rceil-n
}
\right\rfloor
+1.
\]
While this is a good bound, it does not outperform the edge bound, though it does match the edge bound for large values of $v$ when the initial graph $G$ has few edges. In particular, when $v=n^2$ we get $N(G,n)\le n+1$, so part~\ref{equi-$n$ bound} of Corollary~\ref{MOLS Corollary} can also be viewed as a corollary of the clique bound.
\end{remark}

\subsection{MOLS via Mutually Orthogonal Colorings}
A set of mutually orthogonal colorings of a graph can be used to create a set of MOLS under the right conditions.  The existence of $k-2$ mutually orthogonal latin squares of order $n$ is equivalent to the existence of $k$ mutually orthogonal equi-$n$ squares of order $n$. A set of $k-2$ mutually orthogonal latin squares of order $n$ can be extended to a set of $k$ mutually orthogonal equi-$n$ squares by adding a square with constant rows and a square with constant columns. It is similarly possible to construct $k-2$ mutually orthogonal latin squares from a set of $k$ mutually orthogonal equi-$n$ squares. The following theorem and its proof provide a generalization of this construction using mutually orthogonal colorings of graphs.

\begin{theorem}\label{MOLP to MOLS Theorem}
Let $G$ be a graph of order $n^2$. Suppose there exists a set of $k$ mutually orthogonal $n$-colorings of $G$. Then there exists a set of $k$ mutually orthogonal equi-$n$ squares, $k-1$ mutually orthogonal row (or column) latin squares, $k-2$ MOLS of order $n$, and $k-3$ single diagonal latin squares.
\end{theorem}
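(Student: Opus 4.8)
The plan is to regard the $n^2$ vertices of $G$ as the cells of an $n\times n$ array and to apply one principle repeatedly: for \emph{balanced} colorings, orthogonality of two colorings $C_s$ and $C_t$ says precisely that each color class of $C_s$ meets each color class of $C_t$ at most once, i.e.\ the color classes of one coloring are transversal-like (``rainbow'') sets for the other. Every statement below is an instance of this observation.

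First I would record that, assuming $k\ge2$ (the smaller cases being degenerate), every coloring in the set is balanced, using each color exactly $n$ times: as noted in the introduction a coloring that uses some color more than $n$ times has no orthogonal mate, and since each $C_i$ has a mate within the set, no color is overused; with $n$ colors on $n^2$ vertices the pigeonhole principle then forces each color to appear exactly $n$ times. Fixing any bijection from $V(G)$ to the cells of an $n\times n$ grid turns each $C_i$ into an equi-$n$ square, and orthogonality of colorings coincides with orthogonality of squares, since a repeated pair $(C_i(u),C_j(u))=(C_i(v),C_j(v))$ would force $u$ and $v$ to share a color in both $C_i$ and $C_j$. This yields $k$ mutually orthogonal equi-$n$ squares.

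Next, the row latin squares and the MOLS arise by promoting colorings to coordinate functions. Use $C_1$ to index rows, declaring each color class of $C_1$ (which has $n$ cells) to be a row; for every $t\ge2$, orthogonality of $C_1$ and $C_t$ makes $C_t$ assume all $n$ colors on each row, so $C_2,\dots,C_k$ are $k-1$ mutually orthogonal row latin squares once a common but arbitrary ordering of the cells within each row is chosen as the columns. To obtain MOLS, additionally use $C_2$ to index columns: since $(C_1,C_2)$ is a bijection from cells to $\{1,\dots,n\}^2$, the cells form a genuine grid, and for each $t\ge3$ orthogonality of $C_t$ with both $C_1$ and $C_2$ makes the induced square $L_t(C_1(v),C_2(v))=C_t(v)$ latin. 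Mutual orthogonality of $L_3,\dots,L_k$ is inherited from that of $C_3,\dots,C_k$, giving $k-2$ MOLS.

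The single diagonal squares need the most care, and this is where I expect the genuine work. Keep $C_1,C_2$ as the row and column coordinates and take a single color class $D$ of the third coloring $C_3$ as a prospective main diagonal. Because $C_3$ is orthogonal to $C_1$ and to $C_2$, the $n$ cells of $D$ occupy distinct rows and distinct columns, and because $C_3$ is orthogonal to each $C_t$ with $t\ge4$, the restriction $C_t|_D$ is injective; thus $D$ is simultaneously a transversal of all the latin squares $L_4,\dots,L_k$. It remains to move $D$ onto the actual diagonal: since $C_1|_D$ and $C_2|_D$ are bijections onto $\{1,\dots,n\}$, the composite $\pi(b)=C_1\bigl((C_2|_D)^{-1}(b)\bigr)$ is a permutation recording that the cell of $D$ in column $b$ lies in row $\pi(b)$, and relabeling column $b$ as $\pi(b)$ permutes the column indices, hence preserves both the latin property and orthogonality while placing every cell of $D$ on the diagonal. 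After this relabeling, $L_4,\dots,L_k$ are mutually orthogonal single diagonal latin squares, giving $k-3$ of them. The one delicate point is checking that the common color class $D$ can indeed be realized as the main diagonal without disturbing the latin and orthogonal structure already arranged; all the remaining verifications collapse to the single transversal principle stated at the outset.
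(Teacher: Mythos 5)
Your proof is correct and follows essentially the same route as the paper's: two orthogonal colorings serve as row and column coordinates, the remaining colorings become the MOLS, and a common transversal (a color class of a third orthogonal coloring in your version, a constant symbol class of $L_{k-2}$ in the paper's) is moved onto the main diagonal by a permutation to get the single diagonal squares. The only cosmetic differences are that you build the equi-$n$ and row latin squares directly from the colorings rather than by appending constant-row/column squares to the MOLS, and you permute columns rather than rows.
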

\begin{proof}
We first construct a set of $k-2$ MOLS. Let $C_1,\ldots,C_k$ be mutually orthogonal $n$-colorings of $G$. We can obtain a set of $k-2$ MOLS $L_1,\ldots,L_{k-2}$ of order $n$ by letting the entry in row $i$ and column $j$ of $L_m$ be the color $C_m(v)$, where $v$ is the unique vertex in $G$ with color $C_{k-1}(v)=i$ and $C_k(v)=j$. To see that the squares $L_1,\ldots,L_{k-2}$ are latin note that each row and column will have distinct entries because $C_{k-1}$ and $C_k$ are orthogonal colorings. Moreover, since $C_1,\ldots,C_{k-2}$ are pairwise orthogonal, the squares $L_1,\ldots,L_{k-2}$ will also be pairwise orthogonal.

To this set of MOLS we can add an equi-$n$ square with constant columns (or constant rows) to obtain a set of $k-1$ row (or column) latin squares. Moreover, if we include both the square with constant rows and the square with constant columns we have a set of $k$ mutually orthogonal equi-$n$ squares.   

To obtain a set of $k-3$ mutually orthogonal single diagonal latin squares we can identify a permutation, $\sigma$, that will permute the rows of $L_{k-2}$ so that the main diagonal entries of $\sigma(L_{k-2})$ are equal. Then apply $\sigma$ to each square $L_1,\ldots,L_{k-3}$. The resulting squares $\sigma(L_1),\ldots,\sigma(L_{k-2})$ will still be mutually orthogonal, and each of $\sigma(L_1),\ldots,\sigma(L_{k-3})$ will be a single diagonal latin square because no symbol will occur more than once down the main diagonal. 
\end{proof}

\subsection{Or-Products}\label{Or-Product Section}
There are several ways to define the product of two graphs. The most commonly studied products are the tensor product graph, the cartesian product graph, the strong product graph, and the lexicographical product graph. We refer the reader to the reference \cite{IK00} for definitions and results about these product graphs. Here we define yet another product graph.

Let $G$ and $H$ be graphs. We define the \emph{or-product}\label{D: or-product} of $G$ and $H$ to be the graph $G\circledcirc H$\label{G circle H} that has vertex set $V(G)\times V(H)$ where vertices $(u,v)$ and $(u',v')$ are adjacent in $G\circledcirc H$ if either $u\sim u'$ in $G$ or $v\sim v'$ in $H$. The or-product is commutative and associative and each of the product graphs named in the paragraph above is a subgraph of the or-product graph. (In fact $G\circledcirc H$ is the complement of the strong product of the complements of $G$ and $H$.)

\begin{example}\label{or-product example}
The Kronecker product of matrices can be used to create a set of MOLS of order $mn$ from a set of MOLS of order $m$ together with a set of MOLS of order $n$. However, the $mn\times mn$ rook's graph is not the same as the graph that one obtains by taking the graph theoretic cartesian product of an $m\times m$ rook's graph and an $n\times n$ rook's graph (nor is it the same graph as any other product of rooks graphs).

Below we show a $4\times 4$ rook's graph, $G$, with a vertex,
%
\begin{tikzpicture}[baseline=(current bounding box.south)]
\begin{scope}[scale=.3]
\draw (0,0) rectangle (1,1);
\clip (0,0) rectangle (1,1);
\draw[ystep=.25,xstep=4,rotate=45] (-1,-1) grid (2,2);
\end{scope}
\end{tikzpicture},
%
that is adjacent to each of the vertices of the form
%
\begin{tikzpicture}[scale=.3,baseline=(current bounding box.south)]
\draw (0,0) rectangle (1,1);
\draw (.5,.5) node[inner sep=0pt]{$\times$}; 
\end{tikzpicture} 
%
and a $3\times 3$ rooks graph, $H$,  with a vertex,
%
\begin{tikzpicture}[rotate=90,baseline=(current bounding box.south)]
\begin{scope}[scale=.3]
\draw (0,0) rectangle (1,1);
\clip (0,0) rectangle (1,1);
\draw[ystep=.25,xstep=4,rotate=45] (-1,-1) grid (2,2);
\end{scope}
\end{tikzpicture},
%
that is adjacent to each of the vertices of the form
%
\begin{tikzpicture}[scale=.3,baseline=(current bounding box.south)]
\draw (0,0) rectangle (1,1);
\draw (.5,.5) node[inner sep=0pt]{+};
\end{tikzpicture}.
%
We have marked the vertices of the graph $G\circledcirc H$ that are adjacent to the vertex
$
\begin{tikzpicture}[scale=.3,baseline=(current bounding box.south)]
\begin{scope}[yshift=11cm]
\draw (0,0) rectangle (1,1);
\clip (0,0) rectangle (1,1);
\draw[ystep=.25,xstep=4,rotate=45] (-1,-1) grid (2,2);
\end{scope}
\begin{scope}[yshift=11cm,xshift=1cm,rotate=90]
\clip (0,0) rectangle (1,1);
\draw[ystep=.25,xstep=4,rotate=45] (-1,-1) grid (2,2);
\end{scope}
\end{tikzpicture}
=
\raisebox{.05cm}{\big(}
\begin{tikzpicture}[baseline=(current bounding box.south)]
\begin{scope}[scale=.3]
\draw (0,0) rectangle (1,1);
\clip (0,0) rectangle (1,1);
\draw[ystep=.25,xstep=4,rotate=45] (-1,-1) grid (2,2);
\end{scope}
\end{tikzpicture}, 
\begin{tikzpicture}[rotate=90,baseline=(current bounding box.south)]
\begin{scope}[scale=.3]
\draw (0,0) rectangle (1,1);
\clip (0,0) rectangle (1,1);
\draw[ystep=.25,xstep=4,rotate=45] (-1,-1) grid (2,2);
\end{scope}
\end{tikzpicture}
\raisebox{.05cm}{\big)}$.

\[
\begin{tikzpicture}[scale=.3]
\draw (0,0) grid (12,12);
\draw[step=4,style=very thick] (0,0) grid (12,12);
\foreach \y in {4,8,12}
\foreach \x in {2,3,4,6,7,8,10,11,12}
{
\draw (\y-3,\x-1)++(-.5,-.5) node{$\times$};
\draw (\x,\y)++(-.5,-.5) node{$\times$};
}
\foreach \y in {9,10,11,12}
\foreach \x in {5,6,...,12}
{
\draw (\y-8,\x-4)++(-.5,-.5) node{$+$};
\draw (\x,\y)++(-.5,-.5) node{$+$};
}
\begin{scope}[yshift=11cm]
\clip (0,0) rectangle (1,1);
\draw[ystep=.25,xstep=4,rotate=45] (-1,-1) grid (2,2);
\end{scope}
\begin{scope}[yshift=11cm,xshift=1cm,rotate=90]
\clip (0,0) rectangle (1,1);
\draw[ystep=.25,xstep=4,rotate=45] (-1,-1) grid (2,2);
\end{scope}
\begin{scope}[xshift=-11cm,yshift=4cm]
\draw (0,0) grid (4,4);
\foreach \x in {2,3,4}
{
\draw (\x,4)++(-.5,-.5) node{$\times$};
\draw (1,\x-1)++(-.5,-.5) node{$\times$};
}
\begin{scope}[yshift=3cm]
\clip (0,0) rectangle (1,1);
\draw[ystep=.25,xstep=4,rotate=45] (-1,-1) grid (2,2);
\end{scope}
\end{scope}
\begin{scope}[xshift=-5cm,yshift=4.5cm]
\draw (0,0) grid (3,3);
\foreach \x in {2,3}
{
\draw (\x,3)++(-.5,-.5) node{$+$};
\draw (1,\x-1)++(-.5,-.5) node{$+$};
}
\begin{scope}[yshift=2cm,xshift=1cm,rotate=90]
\clip (0,0) rectangle (1,1);
\draw[ystep=.25,xstep=4,rotate=45] (-1,-1) grid (2,2);
\end{scope}
\end{scope}
\draw (-6,6) node {$\circledcirc$};
\draw (-1,6) node {$=$};
\end{tikzpicture}
.
\]
\end{example}

The following result for graphs is a strengthening of the Kronecker product construction of mutually orthogonal latin squares.

\begin{theorem}\label{Or Product Theorem}
Let $G$ and $H$ be graphs and $m$ and $n$ be positive integers. If $S$ is a subgraph of $G\circledcirc H$, then
\[
N(S,mn)\ge \min\{N(G,m),N(H,n)\}.
\]
\end{theorem}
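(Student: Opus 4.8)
If $S$ is a subgraph of $G\circledcirc H$, then $N(S,mn)\ge\min\{N(G,m),N(H,n)\}$.

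\textbf{Proof proposal.} The plan is to reduce immediately to the case $S = G\circledcirc H$ and then build the colorings as coordinatewise products. Since mutually orthogonal $n$-colorings of a graph restrict to mutually orthogonal $n$-colorings of any subgraph, the subgraph remark gives $N(S,mn)\ge N(G\circledcirc H,mn)$ whenever $S$ is a subgraph of $G\circledcirc H$. Hence it suffices to prove $N(G\circledcirc H,mn)\ge k$, where $k:=\min\{N(G,m),N(H,n)\}$. To that end I would fix $k$ mutually orthogonal $m$-colorings $A_1,\dots,A_k$ of $G$ and $k$ mutually orthogonal $n$-colorings $B_1,\dots,B_k$ of $H$, which is possible because $k$ does not exceed either $N(G,m)$ or $N(H,n)$.

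Next I would define, for each $i$, a coloring $C_i$ of $G\circledcirc H$ by $C_i(u,v)=(A_i(u),B_i(v))$, using the $mn$ colors of $\{1,\dots,m\}\times\{1,\dots,n\}$. Two properties must then be checked. Propriety follows directly from the ``or'' in the adjacency rule of the or-product: if $(u,v)\sim(u',v')$ in $G\circledcirc H$ then $u\sim u'$ in $G$ or $v\sim v'$ in $H$, and in the respective case the first or the second coordinate of the color pair differs, since $A_i$ (resp. $B_i$) is a proper coloring. So each $C_i$ is an $mn$-coloring of $G\circledcirc H$.

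For orthogonality, suppose two distinct vertices $(u,v)\ne(u',v')$ share a color under $C_i$; then $A_i(u)=A_i(u')$ and $B_i(v)=B_i(v')$. Because the two product vertices are distinct, they differ in at least one coordinate, say $u\ne u'$ (the case $v\ne v'$ being symmetric). Then orthogonality of $A_i$ and $A_j$ forces $A_j(u)\ne A_j(u')$, so the first coordinates of $C_j(u,v)$ and $C_j(u',v')$ differ and the two vertices receive distinct colors under $C_j$. This establishes $C_i\perp C_j$ for all $i\ne j$, so $C_1,\dots,C_k$ form a set of $k$ mutually orthogonal $mn$-colorings, giving $N(G\circledcirc H,mn)\ge k$ and hence the theorem.

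The step I expect to be the main obstacle is the orthogonality verification: one must observe that a color coincidence in $C_i$ forces coordinatewise coincidences in \emph{both} factors simultaneously, and that distinctness of the two product vertices guarantees at least one factor supplies two genuinely distinct vertices to which the factorwise orthogonality hypothesis can be applied. Everything else is routine; in particular the degenerate cases where $N(G,m)$ or $N(H,n)$ is infinite require no separate treatment, since the construction produces $k$ mutually orthogonal colorings for every finite $k$.
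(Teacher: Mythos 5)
Your proof is correct and follows essentially the same route as the paper's: reduce to $S=G\circledcirc H$ via the subgraph remark, form the coordinatewise product colorings $C_i(u,v)=(A_i(u),B_i(v))$, and verify propriety from the ``or'' adjacency and orthogonality from factorwise orthogonality. The only cosmetic difference is that you argue orthogonality directly from the definition (a shared color in $C_i$ plus distinctness in one coordinate forces distinct colors in $C_j$), whereas the paper shows equal ordered color pairs force equal vertices; these are equivalent formulations.
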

\begin{proof}
It is sufficient to show that 
\[
N(G\circledcirc H,mn)\ge\min \{N(G,m),N(H,n)\}.
\]
We create $mn$ colors by taking ordered pairs of colors where the first color is a color applied to vertices of $G$ and the second color is a color applied to vertices of $H$. We may then use a coloring, $A$, of $G$ and a coloring, $B$, of $H$ to obtain a coloring $AB$ of $G\circledcirc H$ by assigning to the vertex $(u,v)$ the color $(A(u),B(v))$. To see that $AB$ is a proper coloring of $G\circledcirc H$ we note that if $(u,v)\ne (u',v')$ and $AB(u,v)=AB(u',v')$ then $A(u)=A(u')$ and $B(v)=B(v')$, so $u\nsim u'$ and $v\nsim v'$ which implies $(u,v)\nsim(u',v')$.

Now assume that we have mutually orthogonal colorings $A_1,\ldots,A_r$ of $G$ and mutually orthogonal colorings $B_1,\ldots,B_r$ of $H$. We claim that the product colorings $A_1B_1,\ldots, A_rB_r$ are mutually orthogonal.

Suppose that for $i\ne j$ we have equal pairs of colors 
\[
\big(A_iB_i(u,v),A_jB_j(u,v)\big)=\big(A_iB_i(u',v'),A_jB_j(u',v')\big).
\]
To conclude that $A_iB_i$ is orthogonal to $A_jB_j$, we must show that $(u,v)=(u',v')$. 

Since $A_iB_i(u,v)=A_iB_i(u',v')$ then $A_i(u)=A_i(u')$ and $B_i(v)=B_i(v')$ while $A_jB_j(u,v)=A_jB_j(u',v')$ implies $A_j(u)=A_j(u')$ and $B_j(v)=B_j(v')$. Since $A_i$ is orthogonal to $A_j$, the equality of the ordered pairs $(A_i(u),A_j(u))=(A_i(u'),A_j(u')))$ implies $u=u'$. Simililarly $(B_i(v),B_j(v))=(B_i(v'),B_j(v'))$ implies $v=v'$. Therefore the coloring $A_iB_i$ is orthogonal to the coloring $A_jB_j$.
\end{proof}

As a special case of Theorem~\ref{Or Product Theorem} we obtain the well-known lower bound on the size of a set of MOLS that can be obtained via a Kronecker product construction \cite{LM98}.

\begin{corollary}\label{Kronecker Corollary}
$N(mn)\ge\min\{N(m),N(n)\}$.
\end{corollary}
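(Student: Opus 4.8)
The plan is to recognize the Kronecker-product statement as the special case of Theorem~\ref{Or Product Theorem} in which $G$ and $H$ are rook's graphs. Writing $R_k$ for the $k\times k$ rook's graph, the paper's identification of a latin square with an $n$-coloring of a rook's graph gives $N(k)=N(R_k,k)$. Thus it suffices to exhibit $R_{mn}$ as a subgraph of $R_m\circledcirc R_n$; once that is done, Theorem~\ref{Or Product Theorem} applied with $S=R_{mn}$, $G=R_m$, $H=R_n$ yields
\[
N(mn)=N(R_{mn},mn)\ge\min\{N(R_m,m),N(R_n,n)\}=\min\{N(m),N(n)\}.
\]

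The heart of the argument is therefore the right index bookkeeping. I would index the vertices of $R_{mn}$ by pairs $((i,k),(j,l))$, where $(i,k)$ records the row of the cell through a fixed bijection $\{1,\dots,mn\}\to\{1,\dots,m\}\times\{1,\dots,n\}$ and $(j,l)$ records the column in the same way. I then define the bijection $\phi$ sending this vertex to $((i,j),(k,l))\in V(R_m)\times V(R_n)=V(R_m\circledcirc H)$ with $H=R_n$; that is, $\phi$ collects the two ``$m$-coordinates'' into a vertex of $R_m$ and the two ``$n$-coordinates'' into a vertex of $R_n$. Since both graphs have $m^2n^2$ vertices, $\phi$ is a bijection of vertex sets.

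It then remains to check that every edge of $R_{mn}$ maps to an edge of the or-product. Two cells share a row in $R_{mn}$ exactly when $i=i'$ and $k=k'$, and share a column exactly when $j=j'$ and $l=l'$. In the row case the images agree in the first $R_m$-coordinate, so either $(i,j)\sim(i',j')$ in $R_m$, or those vertices coincide and then the distinctness of the original cells forces $l\neq l'$ with $k=k'$, giving $(k,l)\sim(k',l')$ in $R_n$; either way the images are adjacent in $R_m\circledcirc R_n$. The column case is symmetric. Hence $\phi$ embeds $R_{mn}$ as a subgraph of $R_m\circledcirc R_n$, and the corollary follows.

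I expect the main obstacle to be precisely this embedding, and in particular the warning issued in Example~\ref{or-product example}: $R_{mn}$ is \emph{not} any of the standard products of $R_m$ and $R_n$, so the cartesian product would be too sparse to contain it. One must use the or-product, and must handle the degenerate subcase in which a shared row (or column) of $R_{mn}$ collapses one factor to a single vertex, so that the needed adjacency is supplied by the other factor. Getting the interleaving of coordinates correct---matching rows with rows and columns with columns across the two factors---is the only genuinely delicate point; the rest is a direct appeal to Theorem~\ref{Or Product Theorem}.
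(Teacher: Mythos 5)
Your proof is correct and follows exactly the route the paper takes: realize $R_{mn}$ as a subgraph of $R_m\circledcirc R_n$ and invoke Theorem~\ref{Or Product Theorem}. The only difference is that you spell out the coordinate bookkeeping for the embedding, which the paper dismisses as holding ``in a natural way''; your verification of it is sound.
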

\begin{proof}
Let $R_j$ denote the $j\times j$ rook's graph for each $j$. The graph, $R_{mn}$, is a subgraph of $R_m\circledcirc R_n$ in a natural way, so by Theorem~\ref{Or Product Theorem} 
\[
N(R_{mn},mn)\ge \min\{N(R_m,m),N(R_n,n)\}.\qedhere
\]  
\end{proof}

\subsection*{Acknowledgements}
The author is indebted to Yair Caro for pointing out several relevant references.

\bibliographystyle{plain}  
\bibliography{Biblio}

\begin{thebibliography}{1}

\bibitem{AG96}
Khaled A.~S. Abdel-Ghaffar.
\newblock On the number of mutually orthogonal partial {L}atin squares.
\newblock {\em Ars Combin.}, 42:259--286, 1996.

\bibitem{ADH85}
D.~Archdeacon, J.~Dinitz, and F.~Harary.
\newblock Orthogonal edge colorings of graphs.
\newblock In {\em Proceedings of the sixteenth {S}outheastern international
  conference on combinatorics, graph theory and computing ({B}oca {R}aton,
  {F}la., 1985)}, volume~47, pages 49--67, 1985.

\bibitem{BCC08}
R.~A. Bailey, P.~J. Cameron, and R.~Connelly.
\newblock Sudoku, gerechte designs, resolutions, affine space, spreads, reguli,
  and {H}amming codes.
\newblock {\em Amer. Math. Monthly}, 115(5):383--404, 2008.

\bibitem{CY99}
Y.~Caro and R.~Yuster.
\newblock Orthogonal colorings of graphs.
\newblock {\em Electron. J. Combin.}, 6:Research Paper 5, 14 pp. (electronic),
  1999.

\bibitem{DK74}
J.~D{\'e}nes and A.~D. Keedwell.
\newblock {\em Latin {S}quares and {T}heir {A}pplications}.
\newblock Academic Press, New York, 1974.

\bibitem{Ger74}
E.~Gergely.
\newblock A remark on doubly diagonalized orthogonal {L}atin squares.
\newblock {\em Discrete Math.}, 10:185--188, 1974.

\bibitem{IK00}
W.~Imrich and S.~Klav\v{z}ar.
\newblock {\em Product {G}raphs}.
\newblock Wiley-Interscience Series in Discrete Mathematics and Optimization.
  Wiley-Interscience, New York, 2000.
\newblock Structure and recognition, With a foreword by Peter Winkler.

\bibitem{LM98}
C.~F. Laywine and G.~L. Mullen.
\newblock {\em Discrete {M}athematics {U}sing {L}atin {S}quares}.
\newblock Wiley-Interscience Series in Discrete Mathematics and Optimization.
  John Wiley \& Sons Inc., New York, 1998.
\newblock A Wiley-Interscience Publication.

\bibitem{PV09}
R.~M. Pedersen and T.~L. Vis.
\newblock Sets of mutually orthogonal {S}udoku {L}atin squares.
\newblock {\em College Math. J.}, 40(3):174--180, 2009.

\end{thebibliography}

\end{document}